\newtheorem {prop}{Proposition}[subsection]
\newtheorem {thm}[prop]{Theorem}%[section]
 \newtheorem {cor}[prop]{Corollary}%[section]
\newtheorem{lem}[prop]{Lemma}
\theoremstyle{definition}
 \newtheorem {rk}[prop]{Remark}%[section]
\newtheorem {df}[prop]{Definition}%[section]
\newtheorem {dfs}[prop]{Definitions}
\newtheorem {ex}[prop]{Example}
\newcommand{\Q} {\mathbb{Q}}
\newcommand{\vb}{\overline{v}}
\newcommand{\R} {\mathbb{R}}
\newcommand{\N} {\mathbb{N}}
\newcommand{\F}{\mathcal{F}}
\newcommand{\la}{\mathcal{L}}
\title {{\bf   Vanishing homology}}
\author{Guillaume Valette}
\address
{Department of Mathematics, University of Toronto, 40 St. George
st, Toronto, ON, Canada M5S 2E4 }
\email{gvalette@math.toronto.edu}
\keywords{}
\thanks{}
\subjclass{14P10, 14P25, 55N20}
\begin{document}
\maketitle

\begin{abstract}
In this paper we introduce a new homology theory devoted to the
study of families such as semi-algebraic or subanalytic families
and in general to any family definable in an o-minimal structure
(such as Denjoy-Carleman definable  or $ln-exp$ definable sets).
The idea is to study the cycles
 which are vanishing when we approach a special fiber. This also enables us to derive local metric invariants for germs of
definable sets. We prove that the homology groups are finitely
generated.
\end{abstract}

\section{Introduction} %In singularity theory, we are often  lead to consider  not only
%subsets but families of subsets.
% Algebraic or analytic  families
%are generically  stable and invariants are very helpful  to detect
%the changes.
%T

The description of the topology of a set nearby a singularity is a
primary focus of   attention of algebraic geometers.  We can
regard a semi-algebraic singular subset of $\R^n$ as a metric
subspace. Then the behavior of the metric structure of a
collapsing family reflects  implicit information on the geometry
of the singularity of the underlying set which is much more
accurate than the one provided by the study of the topology.

 In \cite{v1}, the author proved a bi-Lipschitz version of
Hardt's theorem \cite{h}. This theorem pointed out  that
semi-algebraic bi-Lipschitz equivalence is a good notion of
equisingularity to classify  semi-algebraic subsets from the
metric point of view. For this purpose, it is also very helpful to
find invariants such as homological invariants.

 In this paper we introduce a homology theory for  families of subsets which provides information about the
behavior of the metric structure of the fibers when we approach a
given fiber.
 This  enables us to construct local
metric invariants for singularities. We prove that these homology
groups are finitely generated when the family is definable in an
o-minimal structure. This allows, for instance, to define an Euler
characteristic which is a metric invariant for germs of algebraic
or analytic  sets.

In   \cite{gm1}, M. Goresky and R. MacPherson introduced
intersection
 homology and  showed that their theory satisfies Poincar\'e
 duality for pseudo-manifolds which cover a quite large class of singular sets
 and turned out to be of great interest. They also managed to compute
  the intersection homology groups from a triangulation which yields that they are finitely
 generated. %) and established a version of the Lefschetz fixed point theorem \cite{gm3}.
 %Pseudo-manifolds  is a huge class of sets and Poincar\'e duality
% lead to a lot of major advances.
% Intersection homology     depends on a perversity
% function used to select the allowable chains.
In \cite{bb1} L.
 Birbrair and J.-P. Brasselet  define  their  admissible
 chains to construct the metric homology groups. Both theories select some chains by putting conditions on the support of the chains.
Our approach is
 similar  in the sense that our homology groups will depend on
 a {\it velocity} which estimates the rate of vanishing of the
 support of the
 chains.

%In complex algebraic geometry, vanishing cycles are very
%interesting in the study of  singularities. They provide a way to
%investigate the topological types and to detect a bifurcation of
%the topological type. Vanishing cycles usually considered are
%vanishing cycles in the meaning of the present paper.

 Our method relies on the result of
 \cite{v1}, where the author showed existence of a triangulation
 enclosing  the metric type of a definable singular set. To  compute the vanishing homology groups  we will not   use   the
  triangulation constructed in \cite{v1} but    Proposition
  $3.2.6$
  of the latter paper (which was actually the main step of the
  construction). It makes it possible for  the results proved below to go over  non necessarily polynomially bounded o-minimal
  structures.
It seems that the method of the present paper could be generalized
to prove that the metric homology groups  introduced in \cite{bb1}
are finitely generated as well.

 %It means that the homology groups will not be topological
 %invariants but semi-

% The idea is to define a homology theory
%which will take in consideration the rate of vanishing of the
%cycles of the fibers.

\medskip

%\noindent {\bf Outline of the results and basic idea.} Let us
%describe the construction by giving some applications in some very
%particular cases. Given $k \in \Q$, a definable family $X$ of
%$\R^n \times \R$ (we consider the variables of $\R$ as parameters)
%of sets dimension $n$ is said {\bf $t^k$-thin} if $X_t$ contains
%no ball of type $B(x(t);N t^k)$ for some $N \in \N$ and any $t \in
%(0;\varepsilon )$. In general, a family of sets $X_t$ of dimension
%$n$ is $t^k$-thin if $X_t$  projects (for a generic projection)
%onto a $t^k$-thin set of $\R^n$.

It is well known that, given a definable family, we may always
study the evolution of the fibers by studying what is called by
algebraic geometers ``the generic fiber" %For instance, in the case
%of algebraic family defined by $f_1=\dots=f_k=0$ the generic
%fibers is the set obtained by regarding the equations in the field
%of real algebraic Puiseux series endowed with the order that makes
%the indeterminate smaller that any real number
(see example \ref{ex fam} for a precise definition).

Therefore  if we carry out a homology theory for definable subsets
in an o-minimal structure expanding a given    arbitrary  real
closed field, we will have a homology theory for families.  This
is the point of view   of the
  present paper. Hence,
even for families of subsets of  $\R^n$, the case of
  an arbitrary real closed field will be required.
Our approach   will be patterned on the one of the classical
homology groups as much as possible. Some statements (Theorem
\ref{thm simplicial is singular}) are close to those given by
Goresky and MacPherson for intersection homology but of course the
techniques are radically different since the setting is  not the
same.

The admissible chains  depend on a velocity which is a convex
subgroup $v$  of our real closed field $R$. For instance, if $R$
is the field of real algebraic Puiseux series endowed with the
order making the indeterminate $T$ smaller than any positive real
number, $v$ may be the subgroup
\begin{equation}\label{eq intro def groupe}\{x:\exists N \in \N,
|x| \leq N T^2\}.\end{equation} The {\it $v$-admissible chains}
are the chains having a ``$v$-thin" support. Roughly speaking, if
$v$ is as above, {\it $v$-thin} subsets of $R^n$ are the generic
fibers of families of sets whose fibers collapse onto a lower
dimensional subset with at least the velocity $N t^2$ (if $t$ is
the parameter of the family, $N \in \N$). For instance, let us
consider the cycle given by Birbrair and Goldshtein's example.
Namely, the subset of $X \subset R^{4}$ defined by:
 \begin{eqnarray}\label{eq birgol}x^2
_1 + x^2 _2 &=& T^{2p}, \nonumber \\x^2 _3 + x^2 _4 &=& T^{2q}.
\end{eqnarray}
 This set is the generic fiber of a family of tori, such that the support of the  generators   of $H_1 (X)$  collapse onto a point at rate
$T^p$ and $T^q$ respectively.  Therefore, if for instance $p=0$
and $q=2$ then the $0$-fiber is a circle and this family of torus
is $v$-thin (with $v$ like in (\ref{eq intro def groupe})).

 Taking all the $v$-admissible chains of a definable set
$X$, we get a chain complex which immediately gives rise to the
{\it $v$-vanishing homology groups} $H_j ^v(X)$.  We will show
that these groups are finitely generated (Corollary \ref{cor
groups finitely gen}).

 If $X$ is the set  defined by (\ref{eq
birgol}) with $v$ like in (\ref{eq intro def groupe}), the
$v$-vanishing homology  groups depend on  of $p$ and $q$.
  For instance, we will prove (see Example \ref{ex tore}) that if $p=0$ and $q=2$:
$$H_1 ^{v} (X)=\Q$$
(if  $\Q$ is our coefficient group), and $H_2 ^{v} (X)=\Q$.

We may summarize it by saying that we get all the $T^2$-thin
cycles of  $X$. The group $H_j ^v(X)$  is not always  a subgroup
of  $H_j (X)$. In general we may also have cycles that do not
appear in the classical homology groups, i. e. which are in the
kernel of the natural map $H_j ^{v} (X) \to H_j (X)$. The
following picture illustrates an example for which such a
situation occurs:
%\psccurve[linewidth=1.2pt](5,-2)(3,-0.6)(1.5,-2.3)(3,-4)(5,-2.6)(7,-4)(8.5,-2)(7,-0.6)

\begin{figure}[ht]
\begin{pspicture}(1,-4.5)(8,-1)
\psccurve[linewidth=1pt](5,-2)(4,-1.3)(2.5,-2.3)(4,-3.3)(5,-2.6)(6,-3.3)(7.5,-2)(6,-1.3)
\psellipse[linewidth=.8pt](5,-2.3)(.1,.3) \rput(5,-4){$a
$}\rput(5,-3.7){\psline[linewidth=.8pt]{->}(0,0)(0,1)}
\end{pspicture}
\caption{}
\end{figure}
%\begin{figure}[ht]
 %     \centerline{
  %         \epsfig{file=fig2.eps, scale=0.2}
   %       }
    %   \caption{}
%       \label{fig-eg2}
%  \end{figure}

The cycle $a$ is collapsing onto a point faster than the set
itself is collapsing.
 We see that we have an admissible  one dimensional chain
$a$ which bounds a two dimensional chain which may fail to be
admissible (depending on the velocity $v$). Therefore $H_1 ^v (X)
\neq 0$ (while $H_1(X) =0$).
\medskip

%
% A chain will be rather a family of chains $\sigma_t$ of $X_t$.
%The $t^k$-admissible chains of $X$ will be (family of) chains
%$\sigma$ such that the support of $\sigma$ is $t^k$-thin. Let us
%denote the corresponding homology groups $H^{T^k} _j (X_{0_+})$
%(for good reasons).

This homology theory is not a homotopy invariant. It is preserved
by Lipschitz homotopies but these are very hard to construct. For
instance, given a function $f: \R^n \to \R$ it is well known that
there exists a topological deformation retract of
$f^{-1}(0;\varepsilon)$ onto $f^{-1}(0)$. It is easy to see that
it is {\it not} possible to find such a retract which would be
Lipschitz if $f(x;y)=y^2-x^3$.  The method used in this paper
provides homotopies that are not Lipschitz but which preserve
admissible chains. It seems that one could define various homology
theories for which this method could be adapted. The theory
developed below seemed to the author the simplest one and the most
natural to start.

We compute the vanishing homology groups in terms of some basic
sets obtained by constructing some nice cells decompositions
(Theorem \ref{thm simplicial is singular}). For this we construct
a homotopy which carries a given singular chain to a chain of
these basic sets (Proposition \ref{prop homotopie}). The homotopy
has to preserve thin subsets. We are not able to construct such a
homotopy for any admissible chain. Chains for which we can
construct such a homotopy are called strongly admissible and are
chains for which the distances in the support are known in a very
explicit way. Therefore, the first step is to show that any class
in $H_j ^v (X)$ has a strongly admissible representant (Lemma
\ref{lem strong adm is onto}). This is achieved by constructing
some rectilinearizations of $v$-thin sets (Proposition \ref{prop
cell dec}). These are maps which transform our set into a union of
hyperplanes crossing normally while controlling the distances in
the transformation.

A non trivial convex subgroup $v$  may be regarded as an interval
in $R$ which has no endpoint. This fact will somewhat complicate
our task. To overcome this difficulty, we introduce an extra point
$u$ ``at the end of $v$" which will fill the gap. This point
living in an extension $k_v$ of $R$, we will carry out most of the
constructions rather in $k_v$ than in $R$. The precise definition
of $k_v$ and the basic related notions are provided in the first
section below. An advantage of using model theory is that we are
able to carry out the theory for all the possible velocities (see
example \ref{ex 1}) in the same time.

\medskip

We may use these homology groups to derive invariants for
semi-algebraic  singularities.  Given a germ $A$  of
semi-algebraic subset of $\R^n$ at the origin, the link of $A$ is
the subset
$$L_r:=A \cap B(0;r)$$
for $r$ small enough. It is known that the homology of the latter
set is a topological invariant of $A$. The cycles of $L_r$ are
collapsing to a single point with a certain ``rate". This rate is
related to
the metric type of the singularity. %In \cite{bb2} the rate of
%vanishing is estimated by considering the Hausdorff measure
% of the cycles.

 It is proved in \cite{v2} that the metric type of the generic fiber of the family $L_r$, namely $L_{0_+}$, is a
 metric invariant of $A$. Therefore the vanishing homology groups $H_j ^v (L_{0_+})$  are
  semi-algebraic
 bi-Lipschitz invariants of $A$ (see section 4.).

%
%Let $c: T_j  \times ]0;\varepsilon [ \to L_r$ be a simplex and let
%$|c|$ be its support. For instance (the theory is much more
%general), in this paper we will say that  $c$ is $r^k$-admissible
%if $|c|$ is $r^k$-thin and $|\partial c|$ is $r^k$-thin as well.
%We denote by $C_j^{k}(A)$ the complex of admissible  chains and by
%$H_j^k (A)$ the corresponding homology groups. These are the
%vanishing homology groups with velocity $k$. Let us draw some
%examples. On example ???.

%To give an example assume that $L_r$ is as described by equation
%(\ref{eq birgol}) with $\alpha$ and $\beta$ Puiseux series of $r$.

\medskip

%As we said, we shall define a homology theory for definable
%subsets in a o-minimal theory expanding a real closed field  (so,
%in particular this will apply for families with several
%parameters). The velocity will be provided by a convex subgroup
%(see examples \ref{ex 1} and \ref{ex fam}).

\noindent {\bf Content of the paper.} In section $1$, we provide
all the basic definitions about the vanishing homology.  We  prove
in the next section some cell decomposition theorems and
rectilinearization theorems necessary to compute the vanishing
homology groups. In section $3$, we compute the $v$-vanishing
homology groups in terms of this cell decomposition. The main
result is Theorem \ref{thm simplicial is singular} which yields
that the homology groups are finitely generated. In section $4$ we
give an application: we find local metric invariants for
singularities. The last section computes the vanishing homology
groups on some examples.

\medskip

 The reader is referred   to \cite{coste} or \cite{vds2} for basic
facts about o-minimal structures.

{\bf Notations and conventions.} Throughout this paper we work
with  a fixed o-minimal structure expanding a real closed field
$R$. Let $\la_R$ be the first order language of ordered fields
together with an $n$-ary function symbol for each function of the
structure. The word definable means $\la_R$-definable. The
language $\la_R(u)$ is the language $\la_R$ extended by an extra
symbol $u$.

The letter $G$ will stand for an abelian group (our coefficient
group). {\it Singular simplices} will be definable continuous maps
$c: T_j \to X$, $T_j$ being the $j$-simplex spanned by
$0,e_1,\dots,e_j$ where $e_1,\dots, e_j$ is the canonical basis of
$R^j$. Sometimes, we will work in an extension $k_v$ of $R$ and
simplices will actually be  maps $c: T_j(k_v) \to k_v^n$ where
$T_j(k_v)$ is the extension of $T_j$ to $k_v$. Given a definable
set $X \subset R^n$ we denote by $C(X)$ the chain complex of
definable chains  with coefficients in a given group $G$. We will
write $|c|$ for the support of a chain $c$.
\medskip

By {\bf Lipschitz function} we will mean a function $f$ satisfying
$$|f(x)-f(x')| \leq N|x-x'|$$
for some integer $N$. It is important to notice that we require
the constant to be an integer for $R$ is not assumed to be
archimedean. A map $h:A \to R^n$ is Lipschitz if all its
components are, and a homeomorphism $h$ is bi-Lipschitz if $h$ and
$h^{-1}$ are Lipschitz.

We denote by $\pi_n: R^n \to R^{n-1}$  the canonical projection
and by $cl(X)$ the closure of a definable set $X$.

%\section{Preliminary constructions.}
%\subsection{Admissible chains}
% In  this section, for every  definable set $X$  we fix a complex  $\Delta(X)$ of submodules of
%$D(X;\F)$. We then set
%$$\Delta_{j}'(X;\F):= \Delta_{j}(X;\F)+ \partial \Delta_{j+1}(X;\F).$$
%
%
%We endow  $\Delta(X;\F)$ with  the classical boundary operator of
%$D(X;\F)$ to get a sucomplex.

\bigskip
\section{Definition of the vanishing homology.}
\subsection{The velocity $v$.} We shall use some very basic facts of model theory. We refer the reader
to \cite{m} for basic definitions.

 The vanishing homology  depends on a {\bf velocity}
$v$ which   estimates the rate of vanishing of the cycles. This is
a  convex subgroup $v$  of $(R;+)$ (convex in the sense that it is
a convex subset of $R$).

We then   define a  $1$-type  by saying that a sentence $\psi(u)
\in  \mathcal{L}_R(u)$ is in this type  iff the set
$$\{x \in R:\psi(x)\}$$
contains an  interval $[a;b]$ with $a \in v$ and $b \notin v$.
This type is complete due to the o-minimality of the theory.

% containing all the sentences
%$$\forall x,y \quad (x\leq u) \wedge (y \leq u) \Rightarrow (x
%+y)\leq u$$ (u is the extra symbol) and  the sentence $u > 0$.

 We will denote by $k_v$ an $\la_R$-elementary extension of $R$ realizing this type.

  Roughly speaking we can say that the velocity is
 characterized by a cut in $R$, at which  the gap is ``bigger" than
 the distance to the origin. This is to ensure that  the sum of two admissible chains will be admissible  (see section 1.3).

\noindent {\bf Notations.}  Throughout this paper, a velocity $v$
is fixed and $u$ is the point realizing the corresponding type in
$k_v$.

  We  define a convex
subgroup $w$ of $(k_v;+)$ extending the group $v$ in a natural
way: $$w:=\{x \in k_v: \exists\, y \in v,  |x|\leq y\}.$$

\begin{rk}\label{rk velocity et points}
Given $z \in R$ we may define a velocity $\N z$ by setting: $$\N z
:= \{ x \in R:\exists\, N \in \N, |x| \leq Nz\}.$$
\end{rk}

\begin{ex}\label{ex 1} Let $k(0_+)$ be the field of real algebraic Puiseux series endowed with the order
 that makes the indeterminate $T$ positive and smaller than any real number (see \cite{bcr} example 1.1.2).  Then, as in the above remark,  the element
  $T^k$ gives rise to a  subgroup $\N T^k$
 which is constituted by  all the series $z$ having a valuation greater or equal to $k$.  % such that $|z| \leq N T ^k$ for some  $N \in \N $.
  One could also consider the velocity
 $v$ defined
 by  the set
 of $x$ satisfying $|x| \leq N T^k$ for {\it any} $N$ in  $\N$. In the field of $ln-exp$ definable germs of one variable functions
 (in a right-hand side neighborhood)
one may consider  the set of all the $L^p$ integrable germs of
series.\end{ex}

 % We then $$ The group $(v;+)$ is
%
%Actually the data of $v$ and $v$  of come to the same since given
%a complete $1$-type $v$ we may define  a convex subgroup of
%$(R;+)$ by setting:$$v:=\{x \in R: |x| \leq u\} .$$

\noindent {\bf Extension of functions.}  On the other hand, as
$k_v$ is an elementary extension of $R$, it is well known that we
may define $X_v$, the extension of $X$ to $k_v$, by regarding the
formula defining $X$ in $k_v ^n$. Every mapping $\sigma: X \to Y$
may also be extended to a mapping $\sigma_v : X_v \to Y_v$.

\subsection{$v$-thin sets.}
We give the definition of the $v$-thin sets which is required to
introduce the vanishing homology.

\begin{dfs}
Let   $j \leq n$ be  integers. A $j$-dimensional definable subset
$X$ of $R^n$ is called {\bf $ v$-thin} if there exists $z \in v$
such that, for any linear projection $\pi:R^n \to R^j$, no ball
(in $R^j$) of radius $z$ entirely lies in
$\pi(X)$. %It is {\bf $ v$-thin} if there
%exists $z$ in $v$ such that it is $z$-thin.

 For simplicity we say that $X$ is {\bf$(j;v)$-thin}  if either $X$ is  $v$-thin  or  $\dim X<j$.
 A set which is not $v$-thin will
 be called {\bf $v$-thick}.
\end{dfs}%(resp. {\bf $(j;z)$-thin})(resp. $z$-thin)

Note that in the above definition it is actually enough to require
that the property holds for a sufficiently generic projection
$\pi: R^n \to R^j$. As we said in the introduction, roughly
speaking, $\N T^2$-thin sets of $k(0_+)^n$ are the generic fibers
of one parameter families whose fibers ``collapse onto a lower
dimensional subset at rate at least $t^2$" (if $t$ is the
parameter of the family). Also, by convention $R^0=\{0\}$ so that
a $0$-dimensional subset is never $v$-thin. This
is natural in the sense that  a family of  points never collapses onto a lower dimensional subset. % at rate $v$ and
%thus may be considered $v$-thin only if the velocity does not tend
%to zero.

\medskip

\noindent{\bf Basic properties of $(j;v)$-thin sets.} %$(1)$ It is not
%sufficient to find one projection $\pi:R^n \to R^j$ such that $X$
%is $v$-thin to prove that $X$ is $v$-thin, even if the restriction
%to $X$ of projection is finite to one. Nevertheless, if we have a
%regular projection $\pi$ for $X$ (see Definition \ref{def reg
%proj}), such that $\pi(X)$ is $v$-thin then $X$ is $v$-thin.
\noindent $(1)$ If a definable subset $A \subset X$ is
$(j;v)$-thin and if $h:X \to Y$ is a definable
 Lipschitz map then $h(A)$ is $(j;v)$-thin.

\noindent $(2)$ Given $j$, $\cup_{i=1}^p X_i$ is $(j;v)$-thin iff
$X_i$ is $(j;v)$-thin for any $i=1,\dots,p$.

\subsection{Definition of the vanishing homology.}
Given a definable set $X$ let $C^v_{j}(X)$ be the $G$-submodule of
$C_j(X)$ generated by all the singular chains $c$ such that $|c|$
is $(j;v)$-thin   and  $|\partial c|$ is $(j;v)$-thin as well. We
endow this complex with the usual boundary operator and denote by
$Z_j ^v(X)$ the cycles of $C_j ^v(X)$.

%$$\underset{x \in T_j}{\sup} \mu(d_x c) \leq u $$
%($d_x c$ is well defined generically).
%
%This means that  for any $x$ in $T_j$ at which $c$ is
%differentiable we may find a unit vector $w$ in $\R^k$ for
%which:$$ |d_x c(w)| \leq v .$$

A chain $\sigma \in C_j^v(X)$ is said {\bf $v$-admissible}. %if $|\sigma|$ is
%$v$-thin and if $|\partial \sigma|$ is $v$-thin as well.
 We denote by $H_j ^v(X)$ the resulting homology
groups which we  call the {\bf $v$-vanishing homology groups}.
%\begin{ex}\end{ex}

If $v$ is $\N z$, for some $z \in R$ (see Remark \ref{rk velocity
et points}), then we will simply write $C_j ^z (X)$ and $H_j ^z
(X)$ (rather than $C^{\N z} _j$ and $H_j ^{\N z})$.

\begin{rk}\label{rmk x vthin vhom is hom}
If $X$ is $v$-thin and if $j=\dim X$ then every $j$-chain is
$v$-admissible. Moreover every $(j+1)$-dimensional chain is
admissible by definition. Hence the map $H_j^v(X) \to H_j(X)$
induced by the inclusion of the chain complexes is an isomorphism.
Note also that the   map $H_{j-1} ^v(X) \to H_{j-1} (X)$ is a
monomorphism.
\end{rk}
\medskip

Every Lipschitz map sends a $(j;v)$-thin set onto a $(j;v)$-thin
set. Thus, every Lipschitz map $f:X \to Y$, where $X$ and $Y$ are
two definable subsets, induces a sequence of mappings $f_{j,v}:H_j
^v (X) \to  H_j ^v (Y)$. In consequence, the vanishing homology
groups are preserved by definable bi-Lipschitz homeomorphisms.

\medskip

%By convention $C_0 ^v=\{0\}$, so that  $H^v _0(X)$ is always zero.

\medskip
As we said in the introduction this homology gives rise to a
metric  invariant for families (preserved by families of
bi-Lipschitz   homeomorphisms) by considering the generic fiber as
described in the following example.

\begin{ex}\label{ex fam} With the notation of example \ref{ex 1}, given an algebraic
family $X \subset \R ^n \times \R$ defined by $f_1=\dots=f_p =0$,
 we set
 $$X_{0_+}:=\{x\in k(0_+)^n: f_1(x;T) =\dots=f_{p}(x;T)=0\}.$$
 Hence, $H_j ^v (X_{0_+})$ is a metric invariant of the family.% We
% are going to see that these groups are finitely generated (Corollary \ref{cor groups finitely gen}).
  %In the introduction we spoke about $t^k$-thin sets. This is coherent with the present definition in the sense
%that the family $X$ is $t^k$-thin iff the set $X_{0_+}$ is $ \N
%T^k$-thin.
\end{ex}

\subsection{The complex $ C_j ^v (X;\F)$.}  Given a finite family
$\F$, of closed subsets of $X$, we write $C_j(X;\F)$ for the
$j$-chains of $\underset{F \in\F}{\oplus }C_j(F)$. Similarly we
set:  $$C_j ^v (X;\F):= \underset{F \in \F}{\oplus} C_j ^v (F)$$
and denote by $H_j ^v (X;\F)$ the corresponding homology groups.
By Remark \ref{rmk x vthin vhom is hom}, if $\tau$ is a chain of
$Z_j ^v (|\sigma|)$ whose class is $\sigma$ in $H_j (|\sigma|)$
then $\tau=\sigma$ in $H_j ^v (|\sigma|)$ as well. Therefore, as
$H_j (|\sigma|;\F)=H_j (|\sigma|)$ we get:
\begin{equation}\label{eq subdivision hom} H_j ^v(X;\F)\simeq H_j ^v
(X).
\end{equation}

\subsection{Strongly admissible chains.}It is  difficult to construct  homotopies between
 $v$-admissible chains. To overcome this difficulty we introduce
 strongly $v$-admissible chains.

\begin{df}\label{def strongly admissible chains}We denote by $T_j^q$ the set of all $(x;\lambda) \in
T_j \times R$ such that $x +\lambda e_q$ belongs to $T_j$. A
simplex $\sigma:T_j \to R$ is {\bf strongly $v$-admissible} if
there exists $q$ such that for any $(x;\lambda) \in T_j^q$:
\begin{equation}\label{eq def fortement admissible}(\sigma(x)-\sigma(x +\lambda e_q)) \in v.\end{equation}
\end{df}

A chain is strongly admissible if it is a combination of strongly
admissible simplices. We  denote by $\widehat{C}_j ^v(X)$ the
chain complex generated by the strongly admissible chains $\sigma$
for which $\partial \sigma$ is strongly admissible, and by
$\widehat{Z}_j (X)$ the strongly admissible cycles. The resulting
homology is denoted by $\widehat{H}_j ^v (X)$. If $\F$ is a family
of closed subsets of $X$, we also define $\widehat{C}_j ^v
(X;\F)$, $\widehat{Z}_j ^v (X;\F)$, and $\widehat{H}_j ^v (X;\F)$
in an analogous way (see section $1.4$).

\begin{rk}\label{rk strongly admissible is admissible}
Let $\sigma :T_j \to R^n$ be a strongly admissible simplex with $j
\leq n$. Then by definition, there exists $z \in v$ such that for
any $x \in T_j$:
$$d(\sigma(x);\sigma(\partial T_j)) \leq z.$$

As $\sigma(\partial T_j)$ is of dimension strictly inferior to $j$
  we see that the image of this set under a  projection onto
$R^j$ contains no open ball in $R^j$. In other words, if $\sigma$
and $\partial \sigma$ are strongly admissible chains then $\sigma$
is admissible. In consequence, a strongly admissible cycle is
admissible.
\end{rk}

\section{Rectilinearizations of $v$-thin sets.}
\subsection{Regular directions.}
We recall a result proved in \cite{v1} which will be very useful
to compute our vanishing homology. We start by the definition of a
regular direction. We denote by $X_{reg}$ the set of points $x \in
X$ at which $X$ is a $C^{1}$ manifold.
\begin{df}\label{def reg proj}
Let $X$ be a definable set of $R^{n}$. An element $\lambda$ of
$S^{n-1} $ is said  {\bf regular  for $X$} if there exists
 a positive $\alpha \in \Q$:
$$d(\lambda;T_x X_{reg}) \geq \alpha,$$
for any $x \in X_{reg}$.
\end{df}

Not every definable set has a regular line. However, we have:
\begin{prop}\label{proj reg hom pres}\cite{v1}
Let $A$ be a definable subset of $R^n$ of empty interior. Then
there exists a definable  bi-Lipschitz homeomorphism $h : R^n
\rightarrow R^n$ such that $e_n$ is regular for  $h(A)$.
\end{prop}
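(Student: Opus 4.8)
The plan is to first replace the condition ``$e_n$ regular for $h(A)$'' by a purely geometric condition about a Gauss map, then settle the low-dimensional case by a dimension count, and finally attack the genuinely hard case by an induction on $\dim A$ using nonlinear bi-Lipschitz maps.

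\emph{Reformulating regularity.} To a definable set $Y\subset R^n$ of dimension $d$ I would attach its Gauss map $\gamma_Y:Y_{reg}\to \G(d,n)$, $x\mapsto T_xY_{reg}$, into the Grassmannian of $d$-planes, and set $\Gamma_Y:=cl(\gamma_Y(Y_{reg}))$, a closed and bounded definable subset of $\G(d,n)$. Since $(\lambda,W)\mapsto d(\lambda,W)$ is definable and continuous, for a fixed unit vector $\lambda$ the infimum $\inf_{W\in\Gamma_Y}d(\lambda,W)$ is attained; hence $\lambda$ is regular for $Y$ exactly when this infimum is $>0$ — one then takes for $\alpha$ a rational number below it and uses $d(\lambda,T_xY_{reg})\ge\inf_{W\in\Gamma_Y}d(\lambda,W)$ for all $x$ — that is, exactly when $\lambda\notin\Lambda_Y:=\bigcup_{W\in\Gamma_Y}W$. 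So it would be enough to find a definable bi-Lipschitz $h:R^n\to R^n$ with $\Lambda_{h(A)}\neq R^n$, and then to compose $h$ with a rotation sending $e_n$ off $\Lambda_{h(A)}$.

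\emph{The low-dimensional case and reduction to cells.} Because $\Lambda_A$ is the projection to $R^n$ of $\{(W,\lambda):W\in\Gamma_A,\ \lambda\in W\}$, a set of dimension $\le\dim\Gamma_A+d\le 2d$, one gets $\dim\Lambda_A\le 2\dim A$; so when $\dim A<n/2$ one already has $\Lambda_A\neq R^n$ and may take $h=\mathrm{id}$. For the general case I would use a $C^1$ cell decomposition $A=\bigsqcup_iC_i$: every point of $A_{reg}$ lies on a top-dimensional cell with the same tangent space, so $\Gamma_A=cl\big(\bigcup_i\gamma_{C_i}(C_i)\big)$ and it suffices to straighten each cell, gluing the finitely many resulting maps with care. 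Each cell is, up to a permutation of coordinates, the graph of a Lipschitz $C^1$ map over an open cell of some $R^d$, and a direct computation shows that if the $e_n$-axis is among the graphing variables then $e_n\notin T_x(\mathrm{graph})$ with a lower bound depending only on the Lipschitz constant; such cells are already regular, so only the cells whose tangent planes come close to $e_n$ need attention.

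\emph{The main obstacle.} The hard part will be the sets with $\dim A\ge n/2$. Since $\Lambda_{L(A)}=L(\Lambda_A)$ for every linear automorphism $L$, and $\Lambda_A$ can already equal all of $R^n$ — this happens for the logarithmic spiral in $R^2$, whose tangent directions fill $S^1$ — no linear change of coordinates can be of any use, and one must bring in genuinely nonlinear definable bi-Lipschitz maps. I would look for ``twist''-type maps that unwind such spiralling behaviour (in the spiral example, multiplication by a slowly varying rotation), and organise the argument as an induction on $\dim A$: apply the inductive hypothesis to the frontier $cl(A)\setminus A$, which has strictly smaller dimension and empty interior, extend the homeomorphism obtained, and then correct the top-dimensional smooth part of $A$ without spoiling what was gained. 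Producing such unwinding maps in the required generality, and checking that they introduce no new near-vertical tangents, is where I expect the real difficulty to lie; this is precisely the work carried out in \cite{v1}.
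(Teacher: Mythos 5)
There is a genuine gap, and you in fact name it yourself: everything before your last paragraph only disposes of the easy regime $\dim A<n/2$ (where a generic linear direction already works), while the substance of the proposition is precisely the remaining case — already a circle in $R^2$, or any hypersurface whose tangent directions fill the sphere, falls into it. For that case you offer only a hope («twist-type maps», an induction on $\dim A$ via the frontier, «gluing the finitely many resulting maps with care») and then explicitly defer the construction to \cite{v1}. But constructing a single definable bi-Lipschitz homeomorphism of all of $R^n$ that simultaneously removes the near-vertical tangents of every cell, without creating new ones where the cells meet, is the entire content of the statement; the cell-by-cell straightening plus ``gluing'' step is exactly what is nontrivial, and the induction you sketch (straighten $cl(A)\setminus A$ first, then ``correct'' the top-dimensional part) is not carried out. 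Note also that the paper itself gives no proof: the proposition is quoted from \cite{v1}, where it is obtained by an induction on the ambient dimension $n$ together with regular-projection arguments, so there is no argument in the present text that your outline could be said to reproduce.

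A secondary, but real, flaw is in your reformulation step. You claim that $\lambda$ is regular for $Y$ exactly when $\lambda\notin\Lambda_Y$, because the infimum of $d(\lambda,W)$ over the closed bounded definable set $\Gamma_Y$ is attained and hence positive. Over a non-archimedean real closed field (which is the setting the paper insists on — recall its Lipschitz constants are required to be integers) a positive attained infimum need not admit a positive rational lower bound: it can be infinitesimal, e.g.\ for a union of segments whose slopes range over $[-K,K]$ with $K$ infinite, every direction outside $\Lambda_Y$ is still within infinitesimal distance of some tangent line, so no direction is regular in the sense of Definition \ref{def reg proj} although $\Lambda_Y\neq R^n$. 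So even the reduction ``find $h$ with $\Lambda_{h(A)}\neq R^n$, then rotate'' does not suffice as stated; one must keep track of uniform rational (or integer) bounds throughout, which again points back to the quantitative construction of \cite{v1} rather than a soft dimension-count argument.
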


\begin{rk}\label{rk globally defined xi}
When $e_n$ is regular for a set $X$, we may find finitely many
Lipschitz definable functions, say $\xi_i:R^{n-1} \to R$,
$i=1,\dots,s,$ satisfying \begin{equation}\label{eq xi ordonnes
rk}\xi_1 \leq \dots \leq \xi_s,\end{equation} and such that the
set $X$ is included in the union of  their respective graphs.
\end{rk}

\medskip

\subsection{Cell decompositions.}
 In order to fix notations we recall
the definition of the cells, which, as usual, are introduced
inductively. All the definitions of this section deal with subsets
of $R^n$, but since $R$ stands for an arbitrary real closed field,
we will use them for subsets of $k_v^n$ as well.

\begin{dfs}\label{def thin cells}%As usual in o-minimal structures we introduce the notion of
%cells inductively.
For $n=0$ a cell of $R^n$ is $\{0\}$. A {\bf cell} $E$ of $R^n$ is
either the graph of a definable function $\xi:E' \to R$, where
$E'$ is a cell of $R^{n-1}$ or a band  of type:
\begin{equation}\label{eq def cell}\{x=(x';x_n)\in E'\times R: \xi_1(x') < x_n< \xi_2(x')\},\end{equation}
where $\xi_{1}, \xi_2:E' \to R$ are two definable functions
satisfying $\xi_1< \xi_2$ or $\pm \infty$. The cell $E$ is {\bf
Lipschitz} if $E'$ is Lipschitz and if  $\xi_1$ and $\xi_2$ (or
$\xi$) are Lipschitz functions (and $\{0\}$ is Lipschitz). A {\bf
closed cell} is the closure of a cell (which is obtained by
replacing $<$ by $\leq$ in the definition).

%A {\bf cell decomposition} of $R^n$ is a partition...

 Given $z \in R$, the Lipschitz cell $E$ is {\bf $z$-admissible} if\begin{enumerate} \item  $E'$ is
$z$-admissible  \item If $E$ is a band  defined by two functions
$\xi_1$ and $\xi_2$, then either  $(\xi_2-\xi_1) (x) \leq  z$ for
 any $x \in E'$, or  $(\xi_2-\xi_1)(x) \geq z$ for
 any $x \in E'$.
\end{enumerate}

Set also that the cell $\{0\}$ is $z$-admissible.

%A cell decomposition of $R^n$ is {\bf compatible with a set} $X$
%if this set is a union of cells of this decomposition.

A cell $E$ of dimension $j$ is canonically homeomorphic to
$(0;1)^j$. The {\bf barycentric subdivision of $E$} is the
partition defined by the image  by this homeomorphism of the
barycentric subdivision of $(0;1)^j$.
\end{dfs}

We shall need the following very easy lemma.

\begin{lem}\label{lem cell thick}
Let $E'$ be a $w$-thick Lipschitz cell of $k_v^{n-1}$ and let
$\xi_{s}:E' \to k_v$, $s=1,2$, be two Lipschitz functions such
that $\xi_1 <\xi_2$ and $(\xi_1-\xi_2 )(x) \notin w$, for any
$x\in E'$. Then the band:
$$E:=\{(x;y) \in E' \times k_v:\xi_1(x)< y <\xi_2(x) \}$$ is $w$-thick.
\end{lem}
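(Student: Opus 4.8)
The plan is to show that for every $z\in w$ there is one fixed linear projection $\pi$ of $k_v^n$ onto $k_v^{\dim E}$ such that $\pi(E)$ contains a ball of radius $z$; since $\dim E=\dim E'+1$, this says exactly that $E$ is not $w$-thin, i.e. $w$-thick.

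First I would straighten the base cell. Write $d:=\dim E'$ and let $S\subseteq\{1,\dots,n-1\}$ be the set of ``band coordinates'' used in building the cell $E'$. By induction on $n$ from the definition of a Lipschitz cell (graph steps preserve dimension and are bi-Lipschitz onto their image under the forgetful projection, band steps add one coordinate which stays a coordinate), the coordinate projection $p_S\colon k_v^{n-1}\to k_v^d$ restricts to a bijection of $E'$ onto a subset $D'\subseteq k_v^d$ whose inverse $\psi:=(p_S|_{E'})^{-1}$ is Lipschitz. Since $p_S$ and $\psi$ are definable and Lipschitz, applying Property $(1)$ of $(j;v)$-thin sets to $\psi$ shows that the $w$-thickness of $E'$ passes to $D'$; being a $d$-dimensional subset of $k_v^d$, $D'$ thus contains a ball of radius $\rho$ for every $\rho\in w$. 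Putting $\eta_i:=\xi_i\circ\psi\colon D'\to k_v$ — still Lipschitz, still with $\eta_1<\eta_2$ and $(\eta_2-\eta_1)(u)\notin w$ for all $u$ — the coordinate projection $\pi:=p_{S\cup\{n\}}$ carries $E$ onto $\widehat E:=\{(u;y)\in D'\times k_v:\eta_1(u)<y<\eta_2(u)\}$.

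It then remains to fit a ball of radius $z$ inside $\widehat E$, for an arbitrary $z\in w$ (the case $z\le 0$ being vacuous). Let $N\in\N$ be a common Lipschitz constant of $\eta_1$ and $\eta_2$, choose $c\in D'$ with $B\bigl(c;2(N+1)z\bigr)\subseteq D'$ (possible since $2(N+1)z\in w$ and $D'$ is $w$-thick), and set $t_0:=\tfrac12(\eta_1(c)+\eta_2(c))$. For $(u;y)$ within distance $z$ of $(c;t_0)$ one has $u\in D'$, while the Lipschitz bounds give $\eta_1(u)<\eta_1(c)+Nz$ and $\eta_2(u)>\eta_2(c)-Nz$; hence $\eta_1(u)<y<\eta_2(u)$ will hold as soon as $2(N+1)z\le\eta_2(c)-\eta_1(c)$, and this last inequality is automatic because its left-hand side lies in $w$, its right-hand side is positive and not in $w$, and $w$ is convex. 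So $B((c;t_0);z)\subseteq\widehat E=\pi(E)$, and letting $z$ range over $w$ finishes the proof. I expect the only non-routine point to be the first step — turning a $w$-thick cell into one carrying genuine balls in its base via a bi-Lipschitz coordinate projection; for a general $w$-thick set no Lipschitz section of the defining projection would be available, which is exactly where the hypothesis that $E'$ is a (Lipschitz) cell is spent. The rest is a short estimate using Lipschitz constants and the convexity of $w$.
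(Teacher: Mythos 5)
Your proof is correct and follows essentially the same route as the paper: reduce to a full-dimensional base (the paper invokes a bi-Lipschitz map onto an open cell, you make it explicit as the coordinate projection with its Lipschitz section), take a ball in the base, and use the integer Lipschitz constant of $\xi_1,\xi_2$ together with the convexity of $w$ and $(\xi_2-\xi_1)(x)\notin w$ to fit a ball into the band. The only (cosmetic) difference is quantifier bookkeeping: you produce, for each $z\in w$, a ball of radius $z$ in a fixed projection of $E$, whereas the paper exhibits a single ball of radius $t/(2N)\notin w$.
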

\begin{proof}
We may assume that $E'$ is  open in $k_v^{n-1}$ since we may find
a bi-Lipschitz homeomorphism which carries $E'$ onto an open cell.
Then $E$ is also an open and the cell $E'$ contains a ball of
radius $z \notin w$, say $B(x_0;z)$. Let
$t:=\xi_2(x_0)-\xi_1(x_0)$; we have by assumption $t \notin w$.
Taking $t$ small enough we may assume $B(x_0;t)$ entirely lies in
$E'$. Let $N$ be the Lipschitz constant of $(\xi_1-\xi_2)$ and
note that: $$\xi_2(x)-\xi_1(x) \geq \frac{t}{2},$$ for $x \in
B(x_0;\frac{t}{2N})$. This implies that $E$ contains a ball of
radius $\frac{t}{2N}$. But, as $t \notin w$ we have $\frac{t}{2N}
\notin w$.
\end{proof}
\medskip

\begin{df}\label{def L cell dec}The subset  $\{0\}$ is an
{\bf $L$-cell decomposition of $R^0$}. For $n >0$, an {\bf
$L$-cell decomposition} of $R^n$ is a cell decomposition of $R^n$
satisfying:
\begin{enumerate} \item[(i)]The cells of $R^{n-1}$ constitute an
$L$-cell decomposition of $R^{n-1}$
 \item[(ii)]   There exist finitely many Lipschitz functions
  $\xi_1,\dots,\xi_s:R^{n-1}\to R$ satisfying (\ref{eq xi ordonnes  rk})
   such that the union of all the  cells which are graphs of a function on a subset of $R^{n-1}$,
    is  the union of the  graphs of the $\xi_i$'s. \end{enumerate}
 An $L$-cell decomposition is
said {\bf compatible with} finitely many given definable subsets
$X_1,\dots,X_m$ if these subsets are union of cells. It is said
{\bf $z$-admissible} if every cell is $z$-admissible. Taking the
barycentric subdivision of every cell, we get a {\bf barycentric
subdivision} of an $L$-cell decomposition.
\end{df}

\medskip

   We are going to show  that,   we may find a $u$-admissible  $L$-cell
decomposition which is compatible with some given
$\la_R(u)$-definable subsets of $k_v^n$. This will be helpful to
prove that the homology groups are finitely generated, since we
will show that only the $\N u$-thin cells are relevant to compute
the homology groups.  The following proposition deals with subsets
of $k_v$ since we will apply it to $k_v$ but of course the proof
goes over an arbitrary model of the theory.

\begin{prop}\label{prop cell dec}
Let  $X_1,\dots,X_m$ be   $\la_R(u)$-definable  subsets of $k^n
_v$. There exists a $\la_R (u)$ definable bi-Lipschitz
homeomorphism $h:k^n _v\to k^n _v$ such that we can find a
$u$-admissible
$L$-cell decomposition of $k_v^n$ compatible with $h(X_1),\dots,h(X_m)$. %and
%such that every cell is either  $z$-thin or $z$-thick.
\end{prop}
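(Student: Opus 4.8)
The plan is to argue by induction on $n$, the case $n=0$ being trivial. For the inductive step I would first secure a good fibre direction. Put $A:=\bigcup_{i}\bigl(cl(X_i)\setminus X_i\bigr)$; this set is $\la_R(u)$-definable and has empty interior in $k_v^n$, so Proposition~\ref{proj reg hom pres} (read in the structure with the parameter $u$) provides an $\la_R(u)$-definable bi-Lipschitz homeomorphism $h_1:k_v^n\to k_v^n$ for which $e_n$ is regular for $h_1(A)$. Replacing each $X_i$ by $h_1(X_i)$, I may assume $e_n$ is regular for $\bigcup_{i}\bigl(cl(X_i)\setminus X_i\bigr)$, and then Remark~\ref{rk globally defined xi} supplies globally defined, ordered, Lipschitz $\la_R(u)$-definable functions $\xi_1\le\cdots\le\xi_s:k_v^{n-1}\to k_v$ whose graphs cover $\bigcup_{i}\bigl(cl(X_i)\setminus X_i\bigr)$.

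Next I would push the whole problem down to $k_v^{n-1}$. Since $e_n$ is regular for the frontier of each $X_i$, a standard cell-decomposition argument yields a finite family $\F'$ of $\la_R(u)$-definable subsets of $k_v^{n-1}$ with the property that, for every cell decomposition of $k_v^{n-1}$ compatible with $\F'$, each $X_i$ is a union of graph-cells $\{(x';\xi_j(x')):x'\in D'\}$ and band-cells $\{\xi_j(x')<x_n<\xi_{j+1}(x')\}$ over the cells $D'$ of the base (into $\F'$ one puts the sets $\{\xi_j<\xi_{j+1}\}$ together with the base loci over which a given graph, resp. band, lies inside a given $X_i$). The key extra ingredient is then to enlarge $\F'$ by adjoining the $\la_R(u)$-definable sets $\{x\in k_v^{n-1}:(\xi_{j+1}-\xi_j)(x)\le u\}$ for $1\le j\le s-1$, and to invoke the inductive hypothesis in dimension $n-1$ on this enlarged family. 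It returns an $\la_R(u)$-definable bi-Lipschitz $h':k_v^{n-1}\to k_v^{n-1}$ together with a $u$-admissible $L$-cell decomposition $\mathcal{D}'$ of $k_v^{n-1}$ compatible with the images under $h'$ of all the sets just listed.

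Finally I would assemble the cell decomposition upstairs. Lift $h'$ to $h'':k_v^n\to k_v^n$ by $h''(x';x_n):=(h'(x');x_n)$, which is again $\la_R(u)$-definable and bi-Lipschitz with an integer constant; set $\tilde\xi_j:=\xi_j\circ(h')^{-1}$, which stay Lipschitz (a composition of Lipschitz maps), ordered and $\la_R(u)$-definable; and over each cell $E'$ of $\mathcal{D}'$ take as cells of $k_v^n$ the graphs of the $\tilde\xi_j|_{E'}$ and the open bands into which these graphs cut the cylinder $E'\times k_v$ (two of them unbounded). This is an $L$-cell decomposition of $k_v^n$: clause (i) of Definition~\ref{def L cell dec} holds because $\mathcal{D}'$ is one, and clause (ii) holds because the graph-cells fit together precisely into the graphs of the global, ordered, Lipschitz functions $\tilde\xi_j$. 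With $h:=h''\circ h_1$ it is compatible with each $h(X_i)$, by the cylindricity extracted from $\F'$ and transported by $h''$. And it is $u$-admissible: every $E'$ is $u$-admissible by the inductive hypothesis, and since $\mathcal{D}'$ is compatible with $h'(\{(\xi_{j+1}-\xi_j)\le u\})$, each $E'$ lies either inside this set or inside its complement, so the width $\tilde\xi_{j+1}-\tilde\xi_j=(\xi_{j+1}-\xi_j)\circ(h')^{-1}$ is $\le u$ throughout $E'$ or $\ge u$ throughout $E'$, which is exactly the band condition in the definition of a $u$-admissible cell; the unbounded bands and the graph-cells require nothing beyond $u$-admissibility of $E'$. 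This proves the proposition with $h=h''\circ h_1$.

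The part I expect to be delicate is not $u$-admissibility in isolation but keeping it compatible with the $L$-structure, i.e. with the demand that the fibre boundaries be \emph{globally} defined Lipschitz functions. The $L$-structure is arranged in the fibre direction (a regular direction, then Remark~\ref{rk globally defined xi}), whereas $u$-admissibility of the bands is a statement purely about the base, governed by the functions $\xi_{j+1}-\xi_j$ on $k_v^{n-1}$; a priori the change of base coordinates $h'$ that one needs in order to straighten and width-control the base could wreck the Lipschitz character of the $\xi_j$, but it cannot, because the new boundaries are $\xi_j\circ(h')^{-1}$ and $(h')^{-1}$ is itself Lipschitz. It is this compatibility of the two repairs that makes the induction go through.
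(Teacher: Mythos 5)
Your plan is the paper's own plan (regular direction, Lipschitz graphs $\xi_1\leq\dots\leq\xi_s$, push the data down to $k_v^{n-1}$ together with the width-comparison sets, induct on the base, rebuild cylindrically), but one step fails as written: you apply Proposition \ref{proj reg hom pres} only to the union of the \emph{frontiers} $cl(X_i)\setminus X_i$. What the cylindrical reassembly actually needs is that the union of the graphs of the $\xi_j$ contains the \emph{topological boundary} of each $X_i$: a fiber segment between two consecutive graphs is definably connected, so it can meet both $X_i$ and its complement only if it meets $\partial X_i$; once $\partial X_i$ is trapped in the graphs, membership of a whole band in $X_i$ becomes a condition on the base point, which is exactly what your family $\F'$ is meant to record. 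The frontier can be strictly smaller than the boundary. Take $X_1=\{(x;y)\in k_v^2 : y\leq 0\}$ (or a closed ball): then $cl(X_1)\setminus X_1=\emptyset$, so your $A$ is empty, no functions $\xi_j$ are produced, the only cells upstairs are full cylinders $E'\times k_v$, and $X_1$ is not a union of them; moreover the ``base locus over which the band lies inside $X_1$'' is empty, so no refinement of the base decomposition can repair this. The paper avoids the problem by applying Proposition \ref{proj reg hom pres} to $\bigcup_{j}\partial X_j$ with $\partial$ the topological boundary (which still has empty interior for definable sets, so the proposition applies).

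With ``frontier'' replaced by ``topological boundary'' (equivalently, adjoin the frontiers of the complements as well), the rest of your argument is correct and coincides with the published proof. The paper takes a cell decomposition of $k_v^n$ compatible with the $X_i$'s, the graphs of the $\xi_j$'s and the sets $\{\xi_{j+1}-\xi_j=u\}$, then applies the induction hypothesis to its base cells; your variant with $\{\xi_{j+1}-\xi_j\leq u\}$ in $\F'$ yields the same dichotomy ($\leq u$ on all of a base cell, or $\geq u$ on all of it), and your explicit lift $h''(x';x_n)=(h'(x');x_n)$ together with the replacement $\tilde\xi_j=\xi_j\circ (h')^{-1}$ spells out a composition that the paper leaves implicit, which is a useful clarification rather than a divergence.
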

\begin{proof}
 For $n=0$ there is nothing to
prove. Assume $n
>1$ and apply Proposition \ref{proj reg hom pres} to $\cup_{j=1} ^m \partial X_j$ (where $\partial $ denotes the topological boundary). Then
(see Remark \ref{rk globally defined xi}) there exist finitely
many definable Lipschitz functions $\xi_i$, $i=1,\dots ,s$
satisfying (\ref{eq xi ordonnes rk}). Consider a cell
decomposition of $k_v^{n}$ compatible with $X_1,\dots,X_m$, all
the graphs of the $\xi_i$'s, as well as all the sets $$\{x\in
k_v^{n-1} :\xi_{i+1}(x) - \xi_i(x) =u\}.$$ Now apply the induction
hypothesis to all the cells of this decomposition which lie in
$k_v^{n-1}$ to get a cell decomposition $\mathcal{E}$ of
$k_v^{n-1}$. Then set $\xi_0:=-\infty$, $\xi_{s+1}:=\infty$, and
consider the cell decomposition of $k_v^n$ constituted by the
graphs of the restrictions of the functions $\xi_i$'s to an
element of $\mathcal{E}$  on the one hand, and all the subsets of
type:
$$\{(x;x_n) \in E \times k_v : \xi_i(x) < x_n  < \xi_{i+1}(x)\},$$
where $E \in \mathcal{E}$, on the other hand. The required
properties hold.
\end{proof}

\bigskip

%
%\begin{prop}
%Let $A$ be a definable subset of $R^j$ and let  $\sigma:A \to R^n$
%be a definable map with $j \leq n$. Assume that
% \end{prop}
%\begin{proof}
%\end{proof}
%
%We will construct nice rectilinearizations of $v$-thin sets. These
%are helpful to produce strongly admissible chains (see section
%$1.5$) which are chains satisfying a much more explicit condition
%than admissible chains.
\subsection{Rectilinearization of $v$-thin sets.}
We introduce the notion of rectilinearization. This is a mapping
which transforms a set into a union of coordinate hyperplanes and
which induces an isomorphism in homology (the usual one).
Admissible rectilinearizations will be very helpful to construct
strongly admissible chains (see section $1.5$).
 We are going to  show that  we can always find a $v$-admissible rectilinearization  compatible
with a given family of $v$-thin sets.

\begin{dfs}
A {\bf hyperplane complex} is a subset $W$ of $R^n$, which is a
union of finitely many coordinate hyperplanes of type $x_j=s$
where, for each hyperplane, $s$ is an integer. There is a
canonical cell decomposition of $R^n$ compatible with $W$. We
refer to the cells (resp. closure of the cells) as the  {\bf cells
of $W$} (resp. {\bf closed cells of $W$}).

Let $X_1,\dots,X_m$ be definable  subsets.  A {\bf
rectilinearization} of $X_1,\dots,X_m$
 is a  mapping $h:R^n \to R^n$, such that the  $h^{-1}(X_i)$'s are union of cells of $W$ and such that   for any $i=1,\dots,m$ the
mapping $h_i:h^{-1}(X_i) \to X_i$ induces an isomorphism in
homology (the usual one).
 %\begin{enumerate} \item\item $h \circ \pi_{n,j}=\pi_{n,j} \circ h$ for any $j
%%\leq n$ \item The images of all the open simplices constitute an
%%$L$-cell decomposition of $R^n$.
%%\end{enumerate}

 If  $X_1,\dots,X_m$ are $v$-thin,  a   rectilinearization of $X_1,\dots,X_m$ is {\bf $v$-admissible} if
  %\begin{enumerate} \item For any  cell $\sigma$
%of $W$ the cell $h(\sigma)$ is either $z$-thick or $z$-thin \item
for each  cell $\sigma$ of $W$ included in $h^{-1}(X_i)$  there
exists an integer $q$ with $e_q$ tangent to $\sigma$ for which
\begin{equation}\label{eq def de rect}(h(x)-h(x +\lambda e_q))
\in  v\end{equation} for any $x \in \sigma$ and $\lambda \in R$
such that
$x+\lambda e_q \in \sigma$. %\end{enumerate}
\end{dfs}

\begin{rk}\label{rk bar subd rect}
After a barycentric subdivision of $h^{-1}(X_i)$, we get a
simplicial complex $K_i$ and a map $h_i:K_i \to X_i$ which induces
an isomorphism in homology. Note that, thanks to (\ref{eq def de
rect}) each simplicial  chain gives rise  (identifying each
$j$-simplex to $T_j$ in a linear way) to a strongly admissible
chain (see Definition \ref{def strongly admissible chains}).
Moreover, as $h$ induces an isomorphism in homology, this
identification defines an isomorphism in homology $H_j (K_i) \to
H_j (X_i)$.
\end{rk}

\medskip

%
%Given a linear map $L : R^n \to R^k$ we define $$\mu(L):= \inf
%\{L(w): |w|=1 \} .$$

%\begin{df}
%A strongly {\bf $v$-admissible triangulation} of $X$ is a
%homeomorphism $h: K \to R^n$ such that $X$ is the image of some
%open simplices and such that for any simplex $\sigma$ has a
%dimensional face $e$ for which:
%$$(h(x)-h(x +\lambda e)) \in v$$
%for any $x$ and $\lambda$ such that $x+\lambda e \in \sigma$.
%\end{df}

 % always exist up to a bi-Lipschitz
%homeomorphism.

\begin{prop}\label{prop caract v thin sets}
Let $X_1,\dots, X_m$ be  closed definable $v$-thin subsets of
$R^n$. Then there exists a   $v$-admissible rectilinearization of
$X_1,\dots, X_m$.
  \end{prop}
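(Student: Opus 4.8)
The plan is to prove Proposition \ref{prop caract v thin sets} by reducing it to the cell-decomposition result Proposition \ref{prop cell dec} via a passage to the extension $k_v$ and the ``endpoint'' $u$, and then descending back to $R$. First I would extend the sets $X_1,\dots,X_m$ to $\la_R(u)$-definable subsets of $k_v^n$; more precisely, since each $X_i$ is $v$-thin, there is $z_i \in v$ witnessing the thinness, and in $k_v$ the element $u$ satisfies $z_i \le u$ for all $i$ (because $u$ lies in the gap at the end of $v$), so each $(X_i)_v$ is $\N u$-thin in $k_v^n$. Apply Proposition \ref{prop cell dec} to the $(X_i)_v$: we obtain a $\la_R(u)$-definable bi-Lipschitz homeomorphism $H:k_v^n \to k_v^n$ and a $u$-admissible $L$-cell decomposition of $k_v^n$ compatible with the $H((X_i)_v)$. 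The key point is that for a $u$-admissible $L$-cell decomposition, the cells which are $\N u$-thick are exactly the bands along which some coordinate difference $\xi_{\ell+1}-\xi_\ell$ is $\ge u$ (hence $\notin w$), by Lemma \ref{lem cell thick}; since each $H((X_i)_v)$ is $\N u$-thin, it meets only $\N u$-thin cells, and along each such cell the relevant ``width'' function is $\le u$, i.e. in $w$.

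Next I would straighten the cells to hyperplane pieces. An $L$-cell decomposition can be turned into a hyperplane complex $W$ by a further bi-Lipschitz change of coordinates: inductively, use the Lipschitz functions $\xi_1 \le \dots \le \xi_s$ to build a map whose last coordinate is (a rescaled version of) $x_n - \xi_i$ on the $i$-th band, gluing the graphs to integer hyperplanes $x_n = i$, and compose with the inductively obtained straightening in $k_v^{n-1}$. Because each band involved has width $\le u \in w$ (after restricting to the $\N u$-thin part, which is all that matters), one can choose the rescaling so that the composite map $h := (\text{straightening}) \circ H$ satisfies, on each cell $\sigma$ contained in $h^{-1}(X_i)$ lying in some hyperplane piece, the estimate $(h(x) - h(x+\lambda e_q)) \in w$ for the direction $e_q$ transverse to that band — this is essentially the content of Definition \ref{def thin cells}(2) of $z$-admissibility with $z = u$, combined with the Lipschitz bounds, so the displacement in the straightening direction stays bounded by an $\N$-multiple of $u$, hence in $w$. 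The directions tangent to the cell are handled because along them $h$ agrees, up to Lipschitz rescaling of bounded ratio, with the previous-dimension straightening, for which the same property holds by induction.

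Finally I would descend from $k_v$ to $R$. The map $h$ and the hyperplane complex $W$ are all $\la_R(u)$-definable; but $u$ is a single point realizing a complete type over $R$, so any $\la_R(u)$-formula that holds is witnessed by an $\la_R$-definable family over a parameter ranging in an interval $[a;b]$ with $a \in v$, $b \notin v$. Specializing $u$ to an appropriate $z \in v$ close enough to the gap (so that all the finitely many inequalities ``$\text{width} \le u$'' that were used become ``$\text{width} \le z$'' with $z \in v$), we obtain an $\la_R$-definable bi-Lipschitz $h:R^n \to R^n$, a hyperplane complex $W$, and cells along which $(h(x)-h(x+\lambda e_q)) \in v$ — which is precisely \eqref{eq def de rect}. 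That $h$ induces isomorphisms in ordinary homology on each $h^{-1}(X_i) \to X_i$ is automatic since $h$ is a homeomorphism. The main obstacle I expect is the bookkeeping in the second paragraph: verifying that the inductive straightening of the $L$-cell decomposition into a hyperplane complex can be done while keeping the displacement in every normal direction within an $\N$-multiple of $u$ (so as to land in $w$), and in particular checking that only the $\N u$-thin cells — where the width bound $\le u$ is available — ever need to satisfy \eqref{eq def de rect}, since the $\N u$-thin hypothesis on the $X_i$'s guarantees $h^{-1}(X_i)$ avoids the thick bands. Once that is in place, the model-theoretic specialization of $u$ to a genuine element of $v$ is routine given the completeness of the type.
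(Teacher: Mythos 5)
Your first and last steps do track the paper's strategy: the paper likewise passes to $k_v$, applies Proposition \ref{prop cell dec} to the extensions $X_{1,v},\dots,X_{m,v}$ to get a $u$-admissible $L$-cell decomposition, proves a statement $\mathbf{(H_n)}$ producing a $\N u$-admissible rectilinearization of the $w$-thin closed cells, and then descends to $R$ exactly as you propose, by noting that the admissibility estimate is a first-order formula in $u$ and specializing $u$ to $z\in v$ large enough (picking up an integer factor from the bi-Lipschitz map $g$). The gap is in your middle step. A further \emph{bi-Lipschitz} change of coordinates straightening the $L$-cell decomposition onto a hyperplane complex cannot exist, and asserting it makes the rest collapse. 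In this paper Lipschitz means with an \emph{integer} constant $N$; a cell of $W$ tangent to $e_q$ has unit size, so for a bi-Lipschitz $h$ one has $|h(x)-h(x+\lambda e_q)|\geq |\lambda|/N$ with $|\lambda|$ up to $1$, and condition \eqref{eq def de rect} would force $1/N\in v$, which fails for the velocities of interest (e.g.\ $v=\N T^2$). Equivalently, bi-Lipschitz maps preserve $v$-thickness, so they can never carry the (unit-sized, hence $v$-thick) cells of $W$ into the $v$-thin sets $X_i$: a $v$-admissible rectilinearization must crush one tangent direction of each such cell to $v$-size and is therefore intrinsically non-bi-Lipschitz. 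Worse, it cannot in general even be a homeomorphism: consecutive functions $\xi_i\leq\xi_{i+1}$ may coincide on part of the base, so bands pinch, and no homeomorphism of $k_v^n$ separates touching graphs into the disjoint hyperplanes $x_n=i$ and $x_n=i+1$.

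This is precisely why the definition of a rectilinearization only asks that $h^{-1}(X_i)\to X_i$ induce isomorphisms in ordinary homology, and why the paper's map goes in the opposite direction to yours: in $\mathbf{(H_n)}$ one sets $\widetilde{h}(x;i+t)=(h(x);(1-t)\xi_i(h(x))+t\xi_{i+1}(h(x)))$, a map \emph{from} the hyperplane complex \emph{to} the set, which is visibly $\N u$-admissible on the thin bands (via Lemma \ref{lem cell thick} and $u$-admissibility of the cells) but is not injective where graphs touch. The homology isomorphism is then not ``automatic'': the paper has to build, for every closed cell $cl(E)$, a strong deformation retract $r_E:\widetilde{h}^{-1}(cl(E))\to C_E$ onto a closed cell of $W$, and deduce the isomorphism over any union of closed cells by Mayer--Vietoris and the $5$-lemma. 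Your sentence ``automatic since $h$ is a homeomorphism'' therefore assumes away exactly the part of the proof that carries the real content; to repair your argument you would have to abandon the bi-Lipschitz straightening, define the map from the straight model to the set as above, and supply the retraction/Mayer--Vietoris argument (or some substitute) to get the homology isomorphisms before the final specialization of $u$, which is otherwise fine.
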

\begin{proof}
We start by  proving  the following statements $\mathbf{(H_n)}$ by
induction on $n$.

\medskip

 \noindent   $\mathbf{(H_n).\;}$
 Let $\mathcal{ E}$ be a $u$-admissible  $L$-cell
decomposition of $k_v^n$ and let $Y_1,\dots,Y_r$ denote the
$w$-thin closed cells.
 Then there exists   a $\N u$-admissible
    rectilinearization $h : k_v^n \to k_v^n$  of  $Y_1,\dots,Y_r$ such
   that,
   %\begin{enumerate} \item[$(i)$]
   for every  $E$ in $\mathcal{E}$, $h^{-1}(cl(E))$ is a union of closed cells of $W$ and  there exists a strong
 deformation retract $r_E :h^{-1}(cl(E)) \to C_E$, where $C_E$ is a closed cell of $W$.
%   $E$, there exists a definable strong deformation retract:
%$r_{E}: \widetilde{h}^{-1}(E) \times[0;1] \to E'$ preserving the
%fibers of $\widetilde{h}$ and the cells, and such that
%$\widetilde{h}|:E' \to E$ is a homeomorphism.
\medskip

Note that it follows from the existence of this deformation
retract that $h$ induces an isomorphism in homology above any
union of closed   cells of $\mathcal{E}$. Actually, the existence
of $r_{Y_i}$ implies
$$H_j (h^{-1}(Y_i))\simeq H_j (C_{Y_i}) \simeq H_j (Y_i), $$
and the map $h_{|h^{-1}(Y_i)}:h^{-1}(Y_i) \to Y_i$ induces an
 isomorphism in homology. Therefore,  thanks to the
 Mayer-Vietoris property and to the $5$-Lemma, we see that  for any subset $X$ constituted by the union of finitely many closed  cells  the map
 $h_{|h^{-1}(X)}:h^{-1}(X) \to X$ induces an isomorphism in homology.
\medskip

%The proof of $\mathbf{(H_n)}$ follows the classical arguments
%generally used for existence of triangulations of definable sets.
Note that nothing is to be proved for $n=0$ and assume
$\mathbf{(H_{n-1})}$. Apply the induction hypothesis to the family
constituted by  the closure of the  cells of $\mathcal{E}$ in
$k_v^{n-1}$ which are $w$-thin to get a rectilinearization $h:k_v
^{n-1} \to k_v ^{n-1}$ and a hyperplane complex $W$.

%

 % given by Proposition \ref{prop
%cell dec}.
%This provides a bi-Lipschitz homeomorphism of $k_v^{n-1}$, that we
%may extend vertically to a bi-Lipschitz homeomorphism of $k_v^n$.
%Let $Y_1,\dots,Y_m$ the image of $g(X_{1,v}),\dots,g(X_{m,v})$ by
%this bi-Lipschitz homeomorphism.

%We first construct a metric rectilinearization of $k_v^n$
%compatible with $Y_1,\dots,Y_m$.
 Note that by definition, the
cells of $\mathcal{E}$ on which the restriction of $\pi_n$ is
one-to-one are included in the union of finitely many graphs of
definable Lipschitz functions $\xi_1,\dots,\xi_s: k_v ^{n-1} \to
k_v$ satisfying (\ref{eq xi ordonnes rk}).

We obtain a hyperplane complex  $\widetilde{W}$ by taking the
inverse image of $W$ by $\pi_n$, and by adding the hyperplanes
defined by $x_n=i$,
$i=1,\dots ,s$.% (where $s$ is as in Definition \ref{def L cell dec}). %After a barycentric subdivision of this polyhedral

Define now  the desired mapping $\widetilde{h}$ as follows:
$$\widetilde{h}(x; i+t)=
(h(x);\,(1-t) \xi_i(h(x))+t\xi_{i+1}(h(x)))$$
 for $ 1 \leq i < s$ integer, $x \in k_v^n$ and $t \in [0;1)$.
Define also:
$$\widetilde{h}(x;\, 1-t)=
(h(x);\xi_1(h(x))-t)$$ and
$$\widetilde{h}(x; \, s+t)=
(h(x);\xi_s(h(x))+t)$$
  for  $t \in [0;\infty)$. This defines a mapping  $\widetilde{h}:
  k_v^n\rightarrow k_v^{n}$. We are going to check that
  \begin{equation}\label{eq h tilda est u admissible}
|\widetilde{h}(x)-\widetilde{h}(x +\lambda e_n)| \leq
 u\end{equation} when $x$ and $(x+\lambda e_n)$ belong to
 the same cell.

% Note that as the $Y_i$'s are $w$-thin, every cell whose image by $\widetilde{h}$ is included in one of the
%$X_i$'s is $w$-thin.
%   Therefore every cell
%contained in $\cup_{i=1} ^m X_i$ is $w$-thin. Therefore, if
Let $\sigma$ be a cell of $\widetilde{W}$ which is mapped into
$\cup_{i=1} ^r Y_i$.  If $\pi_n (\sigma)$ is $w$-thin (\ref{eq h
tilda est u admissible}) follows from the induction hypothesis.
Otherwise $\widetilde{h}(\sigma)$ must lie in the band delimited
by the graphs of the restrictions of $\xi_i$ and $\xi_{i+1}$ for
some $i \in \{1,\dots,s-1\}$ as described in (\ref{eq def cell}).
If $\widetilde{h}(\pi_n(\sigma))$ fails to be $w$-thin then,
thanks to Lemma \ref{lem cell thick} (recall that
$\widetilde{h}(\sigma)$ is $w$-thin) and the $u$-admissibility of
the cell decomposition, we necessarily have:
$$|\xi_i(x) -\xi_{i+1}(x)| \leq u,$$ for any $x \in \pi_n(\sigma)$.
This, together with definition of $\widetilde{h}$, implies that
$\widetilde{h}$ satisfies
%\begin{equation}\label{eq h tilda est u admissible}|\widetilde{h}(x)-\widetilde{h}(x +\lambda
%e_n)| \leq u\end{equation} when $x$ and $(x+\lambda e_n)$ belong
%to $\sigma$.
 (\ref{eq h tilda est u admissible}) and yields that
$\widetilde{h}$ is $ \N u$-admissible. It remains to find the
retraction $r_E$ for each  cell $E$.

Fix $E \in \mathcal{E}$ and observe that it follows from the
definition of $\widetilde{h}$ and  the induction hypothesis that
$\widetilde{h}^{-1}(cl(E))$ is a union of cells of
$\widetilde{W}$. If $E$ is the graph of a function $\xi:E' \to
k_v$ (where  $E':=\pi_n(E)$), then the result directly follows
from the induction hypothesis. Otherwise, since $\mathcal{E}$ is
an $L$-cell decomposition, the cell $E$ lies in the band delimited
by the graphs of two consecutive functions, say $\xi_i$ and
$\xi_{+1}$.
 Let
$$\Gamma_i:= \{(x;x_n) \in k_v ^{n-1} \times k_v :i\leq x_n \leq i+1
\}. $$

We first define first a retract:
$$r_{E}': \widetilde{h}^{-1}(cl(E)) \times [0;\frac{1}{2}]_{k_v} \to \Gamma_i \cap \widetilde{h}^{-1}(cl(E)) , $$
by setting for $x_n \geq i+1$:$$r'_E(x;x_n;t):=(x;2t x_n+
(1-2t)(i+1)),$$  and for $x_n \leq i$:$$r'_E(x;x_n;t):=(x;2t x_n+
(1-2t)i),$$ and of course $r_E'(x;x_n;t):=(x;x_n)$ when $i \leq
x_n \leq i+1$.

 Note that it follows from the
definition of $\widetilde{h}$ that if $(x;x_n)$ belongs to
$\widetilde{h}^{-1}(cl(E))$ then for any $i+1 \leq x_n '\leq x_n$
and any $x_n \leq x_n '\leq i$:
$$\widetilde{h}(x;x'_n)=\widetilde{h}(x;x_n).$$ This implies that $r'_E$ preserves
$\widetilde{h}^{-1}(cl(E))$.

 On the other hand, thanks to the induction hypothesis, there exists a
retract  $r_{E'}: h^{-1}(cl(E')) \times [0;1]_{k_v} \to C_{E'}$.
Let us extend this $r_{E'}$ into a retract:
 $$r''_{E'}: \pi_n^{-1}(h^{-1}(cl(E'))) \times [\frac{1}{2};1]_{k_v} \to \pi_n ^{-1}(C_{E'})$$
by $$r'_E(x;x_n;t):=(r_{E'}(x;2t-1);x_n).$$

Clearly, there exists a unique cell $C_E$ of $\widetilde{W}$ which
is included in $\Gamma_i$ and which projects on $C_{E'}$. Now,
these retracts give rise to a retract
$$\widetilde{r}_E :\widetilde{h}^{-1}(cl(E)) \times [0;1]_{k_v} \to
C_E$$ defined by $\widetilde{r}_E(x;t):=r'_E(x;t)$ if $t \leq
\frac{1}{2}$ and
$$\widetilde{r}_E(x;t):=r''_E(r'_E(x;\frac{1}{2});t)$$
 if $t \geq \frac{1}{2}$. This yields $\mathbf{(H_n)}$.

\bigskip

We return to the proof of the proposition. Apply Proposition
\ref{prop cell dec} to $X_{1,v},\dots,X_{m,v}$. This provides a
bi-Lipschitz  homeomorphism $g: k_v^n \to k_v^n$ such that we can
find a $u$-admissible $L$-cell decomposition of $k_v^n$ compatible
with $g(X_{1,v}),\dots,g(X_{m,v})$. Note that, as the
$g(X_{i,v})$'s are $w$-thin, each of them is the   union of some
$w$-thin cells. Then by $\mathbf{(H_n)}$, there exists a $\N
u$-admissible rectilinearization of these cells $h:k_v^n \to
k_v^n$.

Composing with $g$, the mapping $h$  gives rise to a $\N
u$-admissible rectilinearization $f$ of
  $X_{1,v},\dots,X_{m,v}$. As the  $X_{i,v}$ are extensions,  there exist two families of
rectilinearizations $f_z$ and $h_z$ for $z \in [a;b]$ with $a < u
<b$ and $a,b\in R$. Let us check that these rectilinearizations
are $v$-admissible for $z \in v$ large enough.

%Note that by construction $(1)$ and $(2)$  hold. We now check that
%this rectilinearization $w$-admissible.

% Let $E$ be a cell. If $\pi(E)$ is $w$-thin then every cell included in
% $h_{|\pi(E)}$ already
% satisfies (\ref{eq def de rect}). Therefore
% $h_{|E}$ fulfills it as well.

Note that each $X_i$  is the  union of the  images by $h_z$ of
finitely many cells of $W$.  Furthermore, as (\ref{eq h tilda est
u admissible}) is a first order formula we get that $h_z$
satisfies on any given cell in the inverse image of  the $X_i$'s:
$$|h_z(x)-h_z(x +\lambda
e_n) |\leq  z, $$ when $x$ and $(x+\lambda e_n)$ belong to this
given cell.

 This implies that $f$ satisfies (since $g$ is bi-Lipschitz):
$$|f_z(x)-f_z(x +\lambda
e_n) |\leq N z, $$ for some $N \in \N$ and any $z \in v$ large
enough on any cell mapped into one of the $X_i$'s. Thus,  (\ref{eq
def de rect}) holds and  $f_z$ is $w$-admissible.
  \end{proof}

\begin{rk} Actually, working a little more, we could have proved
that the constructed rectilinearization induces an  isomorphism in
homology above any subset $A$ of $R^n$. Namely, in the above
proof, given a subset $A$ of $R^n$, the induced mapping
$\widetilde{h}:\widetilde{h}^{-1}(A) \to A$ induces an isomorphism
in  homology.

Observe also that the constructed mapping is a homeomorphism above
a dense definable subset. If we take an algebraic hypersurface,
the situation is fairly similar to the one which occurs with
resolution of singularities in the sense that the inverse image of
the set above which the map is not one-to-one (the ``exceptional
divisor") is constituted by finitely many coordinate
  hyperplanes
 normal to the hyperplanes lying above our given set. We could also
 have a more precise description of how the mapping $h$ modifies the distances (like in \cite{v1}). More
 precisely, it is possible to see that on each cell,
 we have $$|h(x)-h(x')| \sim  \sum _{i=1} ^n \varphi_i (x)|x_i -x'_i|$$
  where $\varphi_i$ is a sum of product of powers of
distances to cells of $W$. If we compare this result with Theorem
$5.1.3$ of \cite{v1}, we see that now the contractions (see
\cite{v1}) are expressed in the canonical basis. The inconvenient
is that the map $h$ is not a homeomorphism (contrarily as in
\cite{v1}), but since it induces  an isomorphism between the
homology groups, it will be enough for the purpose of the present
paper.
\end{rk}

%\begin{lem}
%Let The inclusion $$C^v_j (X;A;\F)\hookrightarrow C^v_j (X;A)$$
%partition $\F$ of $X$ compatible with $A$.
%\end{lem}
%\begin{proof}
%Consider a chain $\sigma$
%\end{proof}

%\subsection{ Strongly $v$-admissible chain and the excision
%property.}

%Let $X$ and $A$ be two definable subsets with $X \subset A$ and
%let $\F$ be a stratification of $X$ compatible with $A$.

%\subsection{Simplicial vanishing homology.} Let $K$ be a finite simplicial complex and $L$ be a subcomplex.  We denote by $C^v(K)$
%the definable  chains, that is the $G$-submodule of $D_j(K)$
%generated by the simplices $c$ for which there exists two vertices
%$a$ and $b$  satisfying
%$$d(a;b) \leq u.$$
%
%This gives rise to chain complex $C'^v (K)$ and $C'^v(K;L)$ (see
%section 1.3). We set $$H_j^v(K;L):=H_j(C'^v _j(K;L)).$$
%
%
%
%
%
%
%The flaw  of strongly $v$-admissible chains is that ??? is more
%difficult to establish. Actually an important step of the proof of
%Theorem ?? is to check that strongly admissible chains and
%admissible chains define the same homology theories.

\section{The vanishing homology groups are finitely generated}
\subsection{Some preliminary lemmas.}
%We recall that given an elementary extension $v$, $R$ and $k_v$ as
%above   we defined a convex subgroup $w$ of $(k_v;+)$ extending
%the group $v$ in a natural way by setting $w=\{x \in k_v:
%\exists\, y \in v, y \geq |x|\}.$
%We  recall that, given an element $z$ in a real closed field $k$,
%we may define a subgroup of $k$, that we will denote $  z$, by $\N
%z=\{x \in k :\exists \, n \in \N, |x| \leq n z\}.$
%
Every mapping $\sigma: T_j \to X$ may  be extended to a mapping
$\sigma_v : T_j (k_v)\to X_v$ (see subsection $1.1$). Let
$\Delta^{ext} _j (X_v) $ be the submodule of $C_j ^w(X_v)$
generated by the simplices which are extensions of an element of
$C^v _j (X)$. Clearly, for each $j$ the mapping:
$$ext :C_j ^v (X) \to \Delta_j ^{ext}(X_v),$$
which assigns to every chain $\sigma$ the chain $\sigma_v$,
induces an isomorphism in homology.

%Obviously the map $c: C^v_j(X) \to C_j
%^v(X)$, $c \mapsto c_v$ induces an isomorphism:
%\begin{equation}\label{eq homologie extension X}H_j(\Delta^{ext}
% (X_v))\simeq H_j ^w (X_v)\end{equation}
%%
%% The
%following proposition compares the admissible homology groups of
%$X$ with velocity $p$ to the homology of the extension $X_k$ with
%the velocity $q$.

%\begin{lem}
%Let $X$ be a definable subset of $R^n$. Then
%$$H^v_j(X;A)\simeq H^w_j(X_k;A_k)$$
%\end{lem}
%\begin{proof}\end{proof}
\medskip

 The following Lemma says that the
vanishing homology groups for the velocities $\N u$ and  $w$
coincide with the homology groups of $\Delta^{ext}_j$ when the
considered set is $\la_R$-definable.

\begin{lem}\label{lem w, ext  et nu}
Let $X$  be a definable subset of $R^n$. Then the maps induced by
the inclusions $H_j(\Delta^{ext} (X_v)) \to H^{  u}_j(X_v)$ and
$H_j(\Delta^{ext} (X_v)) \to H^w_j(X_v)$   are isomorphisms for
any $j$.
\end{lem}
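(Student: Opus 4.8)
The plan is to show that the inclusions of chain complexes $\Delta^{ext}_\bullet(X_v) \hookrightarrow C^u_\bullet(X_v)$ and $\Delta^{ext}_\bullet(X_v) \hookrightarrow C^w_\bullet(X_v)$ are quasi-isomorphisms. First I would clarify the landscape of the three complexes: by construction $\Delta^{ext}_j(X_v) \subset C^w_j(X_v)$, and since every $\N u$-thin subset of $k_v^n$ is $w$-thin (as $\N u \subset w$), we also have $C^u_j(X_v) \subset C^w_j(X_v)$; moreover an extension $\sigma_v$ of a $v$-admissible $\sigma$ has support equal to the extension of the support of $\sigma$, and the extension of a $(j;v)$-thin set is $(j;\N u)$-thin — indeed $X$ being $v$-thin means no ball of some radius $z\in v$ lies in any linear projection $\pi(X)$, and since $z < u$ in $k_v$ this passes to the extension with radius $u$. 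Hence $\Delta^{ext}_j(X_v) \subset C^u_j(X_v) \subset C^w_j(X_v)$, and it suffices to prove that the outer inclusion $\Delta^{ext} \hookrightarrow C^w$ induces an isomorphism in homology, together with surjectivity at the level of the inner inclusion, or (cleaner) that both inclusions are separately quasi-isomorphisms by the same argument.

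The heart of the matter is the following: given a $w$-admissible singular chain $c$ in $X_v$, I want to produce a homologous chain that is (up to a chain homotopy living in $C^w$) an extension of something defined over $R$, or at least an $\N u$-admissible chain of the form $\Delta^{ext}$. The key input is that $X_v$ is the extension of the $\la_R$-definable set $X$, so that $X_v$ is \emph{defined over $R$}: it carries no ``new'' subsets in the relevant sense. Concretely I would invoke the triangulation/cell-decomposition machinery — a definable triangulation of $X$ over $R$ extends to a triangulation of $X_v$ over $k_v$ whose simplices, being extensions, are automatically in $\Delta^{ext}$. Using the simplicial approximation philosophy (and equation (\ref{eq subdivision hom}) together with Remark \ref{rmk x vthin vhom is hom}), any $w$-admissible cycle in $X_v$ can be pushed, by a $w$-admissible chain homotopy, onto a simplicial cycle of this extended triangulation; a simplicial cycle of an $R$-defined triangulation is visibly an element of $\Delta^{ext}$. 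This gives surjectivity of $H_j(\Delta^{ext}) \to H^w_j$. For injectivity one runs the same argument one dimension up: a $w$-admissible $(j+1)$-chain bounding an element of $\Delta^{ext}_j$ is likewise homologous, rel boundary, to a simplicial chain which is an extension, using that the chain homotopy can be chosen $w$-admissible because its support is swept out in a controlled, ``small'' (i.e. $w$-thin) way over the lower-dimensional skeleton.

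The main obstacle I anticipate is ensuring that all these homotopies and subdivisions \emph{preserve $w$-admissibility} (and, for the $\N u$ statement, $\N u$-admissibility) — the supports of the homotopies must remain $w$-thin, and this is exactly the delicate point the paper has been setting up with the notions of strongly admissible chains and admissible rectilinearizations. I would therefore route the argument through Proposition \ref{prop caract v thin sets} / Remark \ref{rk strongly admissible is admissible}: the rectilinearization of the relevant $w$-thin cells of $X_v$ converts a $w$-admissible chain into a strongly $w$-admissible one, whose supporting distances are explicit, and for strongly admissible chains the standard prism/cone homotopies are manifestly $w$-admissible. The secondary subtlety is purely model-theoretic bookkeeping: one must check that the objects produced over $k_v$ are genuinely $\la_R(u)$-definable and, where claimed, extensions of $\la_R$-definable objects over $R$ — this is where the choice of $k_v$ as an $\la_R$-elementary extension and the first-order nature of the admissibility condition (\ref{eq def fortement admissible}) are used, exactly as at the end of the proof of Proposition \ref{prop caract v thin sets}. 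Once admissibility-preservation is in hand, the rest is the usual five-lemma / acyclic-models packaging and is routine.
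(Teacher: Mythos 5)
There is a genuine gap, and it is located at the central step of your plan. You assert that ``a simplicial cycle of an $R$-defined triangulation is visibly an element of $\Delta^{ext}$'' and that any $w$-admissible cycle of $X_v$ can be pushed onto such a cycle by a $w$-admissible chain homotopy. Neither half survives scrutiny. First, $\Delta^{ext}_j(X_v)$ is not ``chains defined over $R$'': it is generated by extensions of elements of $C^v_j(X)$, i.e.\ of chains over $R$ that are themselves $v$-admissible. The $j$-simplices of a generic definable triangulation of $X$ over $R$ are $v$-thick, so their extensions do not lie in $\Delta^{ext}_j(X_v)$ at all (you also have the inclusion backwards in your first paragraph: since $v<u$, one has $w\subset \N u$ and hence $C^w_j\subset C^u_j$, not the reverse). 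Second, no $w$-admissible homotopy onto the skeleton of a fixed $R$-triangulation can exist in general: the map $H^w_j(X_v)\to H_j(X_v)$ is not injective (this is exactly the phenomenon of the figure in the introduction, where a thin cycle bounds only a thick chain), so a thin cycle that is nonzero in $H^w_j$ but zero in ordinary homology cannot be carried by a thin homotopy onto a cycle of a triangulation that is not adapted to the thin geometry; your argument, if it worked, would prove too much. Routing through Proposition \ref{prop caract v thin sets} does not repair this, because the adapted decompositions of the paper are built with the parameter $u$ (they are $\la_R(u)$-definable, not $\la_R$-definable), and the rectilinearization machinery is what this lemma is meant to feed into later (Lemma \ref{lem strong adm is onto}, Proposition \ref{prop homotopie}); it does not by itself relate chains over $k_v$ to extensions of chains from $R$.

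The missing idea is a short model-theoretic specialization argument, which is the paper's entire proof. Since each simplex $c_i$ of a cycle $\sigma\in Z^u_j(X_v)$ is $\la_R(u)$-definable and $u$ realizes the type determined by $v$, there are $\la_R$-definable families $\tau_i:T_j(k_v)\times[a;u]_{k_v}\to X_v$, $a\in v$, with $c_i(x)=\tau_i(x;u)$. Setting $\theta_i(x):=\tau_i(x;a)$ gives a chain $\theta\in\Delta^{ext}_j(X_v)$, and the prisms $\tau_i$ (after subdivision) form an admissible $(j+1)$-chain $\tau$ with $\partial\tau=\sigma-\theta$; the admissibility of $\tau$ for $a\in v$ large enough follows because admissibility is expressible by an $\la_R(u)$-formula, so the property holding at $t=u$ holds on an interval $[a;u]$. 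This gives surjectivity, and the same deformation applied one dimension up gives injectivity. No triangulation, rectilinearization, or simplicial approximation is needed, and this is the only mechanism in the paper that converts a chain defined with the parameter $u$ into an extension of a ($v$-admissible) chain over $R$.
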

\begin{proof}
We do the proof for $u$. To get the proof for $w$, just replace $
u$ by $w$. We first check that this map is onto. Let
$\sigma=\sum_{i\in I}  g_i c_i \in Z^{ u}_j(X_v)$. By definition
of $u$ there exist finitely many $\la_R$-definable mappings, say
$\tau_i:T_j(k_v) \times [a;u]_{k_v} \to X_v$, with $a \in v$  such
that $c_i(x)=\tau_{i}(x;u)$
  for any $x \in T_j(k_v)$. Define
$\theta_i(x):=\tau_{i}(x;a)$ and $\theta:=\sum_{i \in I} g_i
\theta_i \in C_j ^{ext}(X_v)$. Observe that $\tau_{i}$ gives rise
to a $\N u$-admissible $(j+1)$-chain (after a subdivision of
$T_j(k_v) \times [a;u]$). Moreover, as the property of
admissibility may be expressed by a formula with parameters in $R$
and with $u$, we know that the obtained chain is $\N u$-admissible
if $a$ is chosen large enough. Set $\tau :=\sum_{i \in I} g_i
\tau_i \in C_{j+1} ^{ u}(X_v)$ and note that since
$\tau_i(x;u)=c_i(x)$ and $\tau_i(x;a)=\theta_i(x)$ we clearly have
$\partial \tau=\sigma-\theta$. As $\theta$ belongs to
$C^{ext}_j(X_v)$, this implies that the inclusion  $C^{ext}_j(X_v)
\to C^{ u}_j(X_v) $ induces a surjection in homology.

We now check that this map is injective by applying a similar
argument. Let $\alpha \in C^{ext}_j(X_v)$ with $\alpha =\partial
\sigma$ where $\sigma$ belongs to $C^{  u} _{j+1}(X_v) $. The
chain $\sigma$
 induces  chains $\tau \in C_{j+2} ^{ u}(X_v)$ and $\theta \in C_{j+1} ^{ext} (X_v)$ such that  $\partial \tau= \sigma-\theta
$   in the same way as in the previous paragraph. But this implies
$\partial \theta=\alpha$ which means that $\alpha \in
\partial  C_{j+1} ^{ext} (X_v)$, as required.
% The chain $\alpha$  gives rise in the same way to an
%extension $\beta \in C_{j+2} ^{\N u}(X_v)$ and $\gamma \in C_{j+1}
%^{ext} (X_v) $ satisfying $\partial \tau =\beta$ and $\partial
%\beta=\gamma-\alpha$.
\end{proof}
% Similarly as in the
%previous paragraph, write $c=\sum g_i c_i$, with $g_i \in G$ and
%$c_i : T_j(k_v) \to X_v$. Each map $c_i$  gives rise to a map such
%that $c_i(x)=\alpha_v(x;u)$ and $a \leq u \leq b$ for any $x \in
%T_j(k_v)$.Now set $\theta(x)=\tau(x;a)$ and
%$\beta(x)=\sum g_i \alpha_i(x;a)$.

Given a definable family $Y$ of $R^n \times R$ and $t \in R$, we
denote by  $Y_t$ the {\bf fiber at $t$}: $$\{x \in R^n: (x;t) \in
Y\}.$$ %We will consider fibers at elements of $k_v$ since we may
%take the extension of the family.

 We also
define the {\bf restriction of the family} to $[a;b]$ as follows:
$$Y_{[a;b]}:=\{(x;t) \in Y: a \leq t\leq b\}.$$
\medskip

\begin{lem}\label{lem N u egal N t}
Let $Y$ be a $\la_R(u)$-definable family of $k_v^n \times k_v$
such that $Y_u$ is a $\N u$-thin subset of $k_v^n$ and let $j=\dim
Y_u$. Then there exists  $z$ in $v$  such that for any $t \in v$
greater than $z$ the map induced by inclusion:
$$H_k ^{ w}(Y_t) \to H_k ^{u}(Y_{[z;u]}),$$
 is an isomorphism  for $k=j$ and  is  one-to-one  for $k=j-1$. %Furthermore, the
%first isomorphism is induced by the inclusion.
\end{lem}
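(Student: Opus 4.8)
The plan is to reduce this to a deformation-retract statement: I want to show that, for $t\in v$ large enough, the fiber $Y_t$ is a strong deformation retract of the restricted family $Y_{[z;u]}$ in a way that is compatible with $\N u$-thinness. The starting point is Hardt's theorem (in its definable, $\la_R(u)$-definable form): there is $z_0\in v$ and a definable trivialization $h: Y_{[z_0;u]}\to Y_u\times [z_0;u]$ over the interval $[z_0;u]$, so that $Y_{[z_0;u]}$ deformation retracts onto $Y_u$. This already gives an isomorphism in ordinary homology $H_k(Y_t)\simeq H_k(Y_{[z;u]})\simeq H_k(Y_u)$ for every $t\in[z_0;u]$. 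The work is to upgrade this to the vanishing homology $H_k^w(Y_t)\to H_k^u(Y_{[z;u]})$, i.e.\ to check that the retraction and the trivialization carry $(k;w)$-thin chains to $(k;u)$-thin chains and back.

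The key steps, in order, are as follows. \emph{Step 1.} Apply definable (parametrized) Hardt triviality to $Y$ over a small interval $[z_0;u]$ ending at $u$; since $u$ realizes the type associated with $v$ and $Y$ is $\la_R(u)$-definable, the trivialization may be chosen $\la_R(u)$-definable, and by elementarity it is the extension of a family of trivializations defined for parameters $t$ in an interval $[a;b]\subset R$ with $a<u<b$. \emph{Step 2.} Note that because $j=\dim Y_u$ and $Y_u$ is $\N u$-thin, every $j$-chain and every $(j+1)$-chain in $Y_u$ is $u$-admissible (Remark \ref{rmk x vthin vhom is hom}); similarly for $Y_t$ with velocity $w$ when $t\in v$ is large (so that $Y_t$ has dimension $j$ and is $w$-thin — this is where one needs $t$ close enough to the cut, using that thinness is a first-order property with parameter $u$). \emph{Step 3.} Check that the inclusion $Y_t\hookrightarrow Y_{[z;u]}$ and the retraction $Y_{[z;u]}\to Y_t$ (obtained by composing the trivialization with the slice at $t$) are Lipschitz with constant an integer, so that by the basic property (1) of $(k;v)$-thin sets they preserve admissibility in both directions; the only subtle point is the direction $Y_{[z;u]}\to Y_t$, where one uses that after shrinking the interval the trivialization can be taken with controlled (integer-Lipschitz) constants — this is the analogue of the bi-Lipschitz Hardt theorem from \cite{v1}. \emph{Step 4.} Conclude: the composition $H_k^w(Y_t)\to H_k^u(Y_{[z;u]})\to H_k^w(Y_t)$ induced by retraction $\circ$ inclusion is the identity up to a chain homotopy that itself preserves admissibility (built from the deformation, which is integer-Lipschitz in the $Y_t$-direction), giving injectivity for all $k\le j$; and surjectivity for $k=j$ follows because $H_j^u(Y_{[z;u]})=H_j(Y_{[z;u]})$ (as $\dim Y_{[z;u]}=j+1$ and so \emph{every} $(j+1)$-chain is $u$-admissible, making $(j+1)$-boundaries unconstrained) while $H_j(Y_{[z;u]})\simeq H_j(Y_t)=H_j^w(Y_t)$, the last equality again by Remark \ref{rmk x vthin vhom is hom}. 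The statement for $k=j-1$ (injectivity only) drops out of the same chain-homotopy argument, the loss of surjectivity reflecting that in dimension $j-1$ not every $j$-chain filling a boundary need be thin.

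The main obstacle is \textbf{Step 3}: producing a deformation retract of $Y_{[z;u]}$ onto $Y_t$ that is Lipschitz with an \emph{integer} constant, uniformly as $t$ ranges over a cofinal set in $v$. Plain definable Hardt triviality gives a homeomorphism, not a Lipschitz one, and the Lipschitz constant could a priori blow up as $t\to u$. Overcoming this is exactly where one invokes the bi-Lipschitz triviality machinery of \cite{v1} (or, more precisely, Proposition $3.2.6$ of that paper, as announced in the introduction): it guarantees that after a preliminary definable bi-Lipschitz modification the family trivializes with bounded, hence — after rescaling by a positive rational — integer, Lipschitz constants on a terminal sub-interval. Once this metric control is in hand, everything else is the formal homological bookkeeping sketched above, using repeatedly that a set of dimension $\le k$ has only $u$-admissible $k$-chains when it is thin, and that $(k+1)$-chains are always admissible.
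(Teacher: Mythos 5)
The core of your argument, Step 3, is a genuine gap. You need a strong deformation retraction of $Y_{[z;u]}$ onto the fiber $Y_t$ that is Lipschitz with an \emph{integer} constant, uniformly for $t$ cofinal in $v$, and you propose to extract it from the bi-Lipschitz triviality machinery of \cite{v1} (Proposition $3.2.6$). That machinery produces, over a suitable piece of the parameter space, trivializations whose \emph{fiberwise} maps are bi-Lipschitz; it does not make the trivialization, nor the induced retraction of the total space onto a single fiber, Lipschitz as a map on $Y_{[z;u]}$. The paper warns against exactly this: Lipschitz homotopies and retracts ``are very hard to construct'', and already for $f(x;y)=y^2-x^3$ there is no Lipschitz retract of $f^{-1}(0;\varepsilon)$ onto $f^{-1}(0)$. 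Moreover the interval $[z;u]$ straddles the cut defining $v$, so the fibers can change size by ratios exceeding every integer, and nothing in \cite{v1} asserts the uniform integer-constant control you need; as written, the main obstacle you yourself identify is not overcome. A secondary slip: in Step 4 you claim $H_j^u(Y_{[z;u]})=H_j(Y_{[z;u]})$ because every $(j+1)$-chain is admissible. That only yields injectivity of the comparison map; a $j$-cycle must itself have $(j;\N u)$-thin support to be admissible, and a $j$-dimensional subset of the $(j+1)$-dimensional $\N u$-thin set $Y_{[z;u]}$ need not be thin.

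What makes the gap avoidable is that no metric control on any retraction is needed: the paper's proof is a pure diagram chase. For $t\in v$ large enough, $Y_t$ is $w$-thin (first-order transfer from the hypothesis at $u$), so $H_j^w(Y_t)\to H_j(Y_t)$ is an isomorphism by Remark \ref{rmk x vthin vhom is hom}; \emph{topological} (not Lipschitz) triviality over a small terminal interval makes $H_j(Y_t)\to H_j(Y_{[z;u]})$ an isomorphism; and since $Y_{[z;u]}$ is $\N u$-thin of dimension $j+1$, the forgetful map $H_j^u(Y_{[z;u]})\to H_j(Y_{[z;u]})$ is a monomorphism (last sentence of the same remark). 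In the commutative square formed by these maps and the inclusion-induced map $H_j^w(Y_t)\to H_j^u(Y_{[z;u]})$, the three facts force the latter to be an isomorphism, and the same square one degree lower (where the first two maps are still injective) gives injectivity for $k=j-1$. So your Steps 1--2 are the right preliminaries, but Steps 3--4 should be replaced by this chase rather than by a Lipschitz retraction that is neither available nor needed.
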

\begin{proof}
%By Remark \ref{rmk x vthin vhom is hom}, we have:
%$$H_j(Y_u)=H_j ^{  u}(Y_u).$$
As $Y$ is $\la_R(u)$-definable and $\N u$-thin there exists $z$ in
$v$ such that for any $t$ in $v$ greater than $z$, $Y_t$ is
$w$-thin. Thanks to Remark \ref{rmk x vthin vhom is hom}, this
implies that the natural mapping  $H_j ^{ w}(Y_t)\to H_j(Y_t)$ is
an isomorphism.

 Furthermore, since the family $Y$ is topologically trivial if
the interval $[z;u]$ is chosen small, the inclusion $H_j (Y_t) \to
H_j (Y_{[z;u]})$ induces an isomorphism in homology as well.

We have the following commutative diagram for $t \in v$ greater
than $z$:
\begin{center}
     \begin{picture}(-140,0)
\put(-120,3){$1$} \put(-120,-47){$4$}
\put(-160,-25){$3$}\put(-65,-25){$2$}
      \put(-180,-3){$H_j ^w(Y_t)$}
        \put(-90,-3){$H_j (Y_t)$}
      \put(-190,-52){$H_j ^u(Y_{[z;u]})$}
          \put(-90,-52){$H_j (Y_{[z;u]})$}
%      \put(-90,15){$ A_{1} $}
%      \put(-90,-45){$A_2$}
      \put(-140,-50){\vector(3,0){40}}
      \put(-140,0){\vector(3,0){40}}
  \put(-165,-10){\vector(0,-1){30}}
      \put(-70,-10){\vector(0,-1){30}}
     % \put(-130,-40){$h_2$}
%      \put(-130,15){$h_1$}
     \end{picture}
    \end{center}
\vskip 20mm

By the above, the  arrows $1$ and $2$ are isomorphisms. Moreover
as $Y_u$ is $\N u$-thin the family $Y_{[z;u]}$ is $\N u$-thin.
Thus, the arrow $4$ is an monomorphism (see the last sentence of
Remark \ref{rmk x vthin vhom is hom}). This implies that the arrow
$3$ is an isomorphism and establishes the theorem in the case
$k=j$.
%$$  H_j (Y_t) \to H_j (Y_{|[z;u]}),$$
%   Now remark that $C_j^{  z}(Y_t)\subset C_j^{ z'}(Y_{z'})$, for
%$z \leq z'$, and  $H_j^w(Y_t)$ is the inductive limit of the
%$H_j^{  z} (Y_t)$ for $z \in w$ (check). As $H_j^{  z}(Y_t)$ is
%independent of $z$ we get the result.

Now, in the case where $k=j-1$ we can write the same diagram for
$H_{j-1}$. The arrows $1$ and $2$ (of the obtained diagram) are
still one-to-one (again thanks to Remark \ref{rmk x vthin vhom is
hom} and the topological triviality of $Y_{[z;u]}$), so that the
arrow $3$ is clearly one-to-one.
\end{proof}

%Note that this Lemma implies that $H_j ^z (Y_z)$ is independent of
%$z$ for $z \in v$ large enough (for $Y_u$ $x$-thin and $v$ .

%The following proposition is clearly the key step of the proof. We
%are going to construct homotopies carrying the cycles on some
%$v$-thin subsets
\bigskip

%We are going to prove the following claim: Let $X_v$ be the
%extension of $X$ to $k_v$. First note that, thanks to Lemma \ref{}
%it is enough to find an increasing sequence of  subsets $Y_{j}$ of
%$X_v$ such that $$H_j ^w(X_v)\simeq Im (H_j(Y_{j,v})\to
%H_j(Y_{j+1,v})).
%$$
%
%   We start by defining the subsets $Y_j$'s. The construction is
%not canonical. According to Proposition ??, up to a
% bi-Lipschitz homeomorphism, we can assume that $X$ has a cell
%decomposition such that each cell is either $u$-thin or $u$-thick.
%Take a barycentric subdivision of this cell decomposition and
%define $Y_j$ as the union of all the $l$-dimensional cells which
%are $u$-thin, for $l \leq j$. Define also $K_j$ as the union of
%all the cells of dimension $l \leq j$.

The following lemma is a consequence of existence of
$v$-admissible rectilinearizations.
\begin{lem}\label{lem strong adm is onto}
Given $X \subset k_v^n$  $\la_R(u)$-definable and $\F$ finite
family
 of closed $\la_R(u)$-definable subsets of $X$, the map
$\widehat{H}_j^w(X;\F) \to H_j ^w(X)$, induced by the inclusion,
is onto.
  \end{lem}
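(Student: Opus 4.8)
The plan is to prove that every class in $H_j^w(X)$ can be represented by a strongly $w$-admissible cycle, by transporting the given cycle to a cycle supported on the cells of a hyperplane complex obtained from a $w$-admissible rectilinearization. First I would reduce to the case where the support data is under control: given a $w$-admissible cycle $\sigma \in Z_j^w(X)$, its support $|\sigma|$ and $|\partial\sigma| = \emptyset$ are $(j;w)$-thin, so after possibly decomposing $\sigma$ into pieces I may assume $|\sigma|$ is a closed $w$-thin $\la_R(u)$-definable subset of $X$ (replacing the original support by its closure only enlarges it within a $w$-thin set, using basic property $(2)$ of $(j;w)$-thin sets, and $|\sigma|$ has dimension $\leq j$). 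Adjoining $|\sigma|$ (and the members of $\F$) to the family, I apply Proposition \ref{prop caract v thin sets} to obtain a $w$-admissible rectilinearization $h : k_v^n \to k_v^n$ of these closed $w$-thin sets, with its hyperplane complex $W$ and, after a barycentric subdivision, simplicial complexes $K_i$ together with maps $h_i : K_i \to X_i$ inducing isomorphisms in homology (Remark \ref{rk bar subd rect}).

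The key step is then to push $\sigma$ across this homology isomorphism. Since $h_{|h^{-1}(|\sigma|)} : h^{-1}(|\sigma|) \to |\sigma|$ induces an isomorphism in homology, the class of $\sigma$ in $H_j(|\sigma|)$ pulls back to a class in $H_j(h^{-1}(|\sigma|))$, which by Remark \ref{rk bar subd rect} is represented by a simplicial chain on the barycentric subdivision of $h^{-1}(|\sigma|)$; composing each simplex with $h$ and identifying it linearly with $T_j$, this yields, again by Remark \ref{rk bar subd rect}, a strongly $w$-admissible chain $\tau$ with $\partial\tau$ strongly admissible (in fact $\partial\tau$ a cycle, hence strongly admissible a fortiori), and $\tau = \sigma$ in $H_j(|\sigma|)$. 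By Remark \ref{rmk x vthin vhom is hom} (applied on $|\sigma|$, which is $w$-thin of dimension $\leq j$, so that $H_j^w(|\sigma|) \to H_j(|\sigma|)$ is an isomorphism), equality in $H_j(|\sigma|)$ forces $\tau = \sigma$ in $H_j^w(|\sigma|)$, hence in $H_j^w(X)$; and since $\tau \in \widehat{C}_j^w(X;\F)$ by construction (its support lies in the relevant closed sets), $\tau$ gives the desired strongly admissible representative. Formula \eqref{eq subdivision hom} lets me freely pass between $H_j^w(X)$ and $H_j^w(X;\F)$ throughout.

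The main obstacle is bookkeeping at the interface between the ``usual'' homology isomorphism furnished by the rectilinearization and the refined statement we need about strong $w$-admissibility: Proposition \ref{prop caract v thin sets} and Remark \ref{rk bar subd rect} guarantee that \emph{cycles} transported this way are strongly admissible, but to land inside $\widehat{C}_j^w(X;\F)$ one also needs the boundary of the transported chain to be strongly admissible, which here is automatic only because we start from a cycle — so care is needed to ensure we never need to transport non-cycles, i.e. to arrange all the work at the level of closed cycles and their classes rather than at the level of chain-level lifts. A secondary subtlety is that $h$ is only a bi-Lipschitz homeomorphism after the extra composition with $g$ in the proof of Proposition \ref{prop caract v thin sets}, but since we only use that $h$ induces an isomorphism in homology above unions of closed cells (and that the transported simplices are strongly admissible), this causes no difficulty. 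Finally, one should check that enlarging $|\sigma|$ to its closure and adjoining it to $\F$ keeps everything $\la_R(u)$-definable and $w$-thin, which follows from the o-minimality of the structure and the basic properties of $(j;w)$-thin sets.
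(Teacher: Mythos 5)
Your argument is essentially the paper's own proof: rectilinearize $|\sigma|$ together with the elements of $\F$ via Proposition \ref{prop caract v thin sets}, use Remark \ref{rk bar subd rect} to produce a strongly $w$-admissible simplicial representative $\tau$ of the class of $\sigma$ in $H_j(|\sigma|)$, and then invoke Remark \ref{rmk x vthin vhom is hom} on the $w$-thin support to get $\tau=\sigma$ in $H_j^w(|\sigma|)$ and hence in $H_j^w(X)$. The extra bookkeeping you add (closure of the support, the degenerate case $\dim|\sigma|<j$, the vanishing of $\partial\tau$) is consistent with, and only fills in, the paper's argument.
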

\begin{proof}
% % Thanks to Lemma \ref{lem w, ext  et nu}, it is enough to show
% that the inclusion  $\widehat{H}_j^w(X_v)
%\to H (\Delta_j^{ext}(X_v))$ is onto.(nec?)
% By definition, for
%any  there exists $z \in v$ such that
%$|\sigma|$ is $(j;z)$-thin.
Let $\sigma \in C_j^{w}(X)$. If the support of $\sigma$ is of
dimension $<j$ then the class of $\sigma$ is $0$ in $H_j ^w(X)$.
Thus, we may assume that $dim |\sigma|=j$.

Let $h:k_v ^n \to k_v ^n$ be a
$w$-admissible rectilinearization of  $|\sigma|$ and of all the elements of $\F$. %By definition of $v$ this
%implies that (since  $|\sigma |$ is an extension there exists a
%family of rectilinearizations $h_z$ for $z \in [a;b]$ with $a < u
%<b$ and $a,b\in R$.  Furthermore, as (\ref{eq def de rect}) is a
%first order formula we get that $h_z$ is $z$-admissible.
%After a subdivision of $W$ we get a simplicial complex $K$.
There exists a simplicial chain $\tau$ (see Remark \ref{rk bar
subd rect}), which is strongly $w$-admissible since $h$ is
$w$-admissible, such that $\sigma= \tau$ in
$H_j(|\sigma|)=H_j^w(|\sigma|)$ (see Remark \ref{rmk x vthin vhom
is hom}). But this means that the class of $\tau$ is that of
$\sigma$ also in $H_j^w(X)$. This yields that the inclusion
induces an onto map in homology.
\end{proof}

It is unclear for the author whether the inclusion of the above
lemma is one-to-one. Actually, it is even unclear whether
$\widehat{H}^v_j(X)$ is finitely generated.

\medskip

%\noindent \underline{First step.}

\subsection{The main result.} It is very hard to construct
homotopies which are Lipschitz mappings. To compute the homology,
we actually just need to find a homotopy that carries a chain
$\sigma$  to the cells of a given cell decomposition, and which
preserves the $v$-admissibility of the chain $\sigma$. We prove
something even weaker: given a strongly $w$-admissible chain, we
may construct a homotopy which carries the chain $\sigma$ to a
strongly $\N u$-admissible chain of the cells of dimension $j$.
This is enough since we have seen that we had  isomorphisms
between the theories defined by $w$ and $\N u$. This technical
step is performed in the following proposition.

\begin{prop}\label{prop  homotopie} Let $X$ be a closed $\la_R (u)$-definable subset of $k_v^n$ and
let $\mathcal{E}$ be a $u$-admissible $L$-cell decomposition
compatible with $X$. Let $\F$ be the family  constituted by  the
closed    cells of $\mathcal{E}$ and let  $Y_j$ be the union of
the closures of the $(\N u;j)$-thin elements of  the barycentric
subdivision of $\mathcal{E}$. Then, there exists a map
$$\varphi:
 \widehat{C} _j ^w (X;\F) \to\widehat{C}_j ^u (Y_j)$$ such that:
%such that the map  $\mu_j \circ \varphi$ where $\mu_j: C _j ^u
%(Y_j) \to C_j ^u (X)$ is the inclusion and the inclusion $H_j ^w
%(X) \to H_j ^u (X_v)$.
\begin{enumerate}
\item[$(i)$] $\quad \varphi \partial -\partial \varphi =0$
\item[$(ii)$] For any $\sigma \in \widehat{Z}_j ^w(X;\F)$ we have:
$\varphi_\sigma=\sigma,$ in $H_j ^u (X)$, \item[$(iii)$] If $Y$ is
the union of some elements of $\F$,  then for any $\sigma \in
\widehat{Z}_j ^w(X;\F)$ with $|\sigma| \subset Y$ we have:
$\varphi_\sigma=\sigma$ in $ H_j ^u (Y)$.
\end{enumerate}
% $\sigma \in \widehat{C}^{ t} _j
%(X)$ there exists $\sigma_1 \in \widehat{C}^{ u} _j
%(Y_j)$ and $\tau \in \widehat{C}_{j+1} ^u (X_v)$ such that $ \partial \tau =\sigma -\sigma_1$. %Moreover, if $\sigma \in C_j^w(Y_j)$ then $\tau \in
%%C_{j+1} ^{ u}(Y_{j})$.
\end{prop}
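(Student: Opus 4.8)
The plan is to construct the map $\varphi$ by induction on $j$, cell by cell, pushing a strongly $w$-admissible chain off the $w$-thick part of the barycentric subdivision of $\mathcal{E}$ and onto the union $Y_j$ of closures of $(\N u;j)$-thin cells, while recording the prism homotopy that realizes the required chain-level identity $(i)$. The starting observation is Remark \ref{rk strongly admissible is admissible} together with Lemma \ref{lem strong adm is onto}: a strongly $w$-admissible simplex $\sigma:T_j\to X$ has $|\sigma|$ contained in a controlled neighborhood of its boundary image, so after subdividing $T_j$ compatibly with the barycentric subdivision of $\mathcal{E}$ we may assume each subsimplex lands in a single closed cell $cl(E)$ of the subdivision. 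For the subsimplices landing in a $(\N u;j)$-thin cell there is nothing to do. For a subsimplex $c$ landing in a $w$-thick closed cell $E$ of dimension $\geq j$ (which, being $w$-thick and of the barycentric subdivision, must actually have dimension $>j$ when $j=\dim |\sigma|$ after we discard lower-dimensional support as in the proof of Lemma \ref{lem strong adm is onto}), I would use the strong $w$-admissibility direction $e_q$ for $c$ to slide $c$ within $E$ in the $e_q$-direction until it is pushed into the union of the faces of $E$; this is the key geometric move and it is exactly where the hypothesis that $\mathcal{E}$ is an $L$-cell decomposition and $u$-admissible is used, via Lemma \ref{lem cell thick}, to guarantee that the band structure of $E$ is compatible with the sliding and that the homotopy stays $\N u$-admissible (the displacement is controlled by the width of the band, which is either $\leq u$ or the whole cell is thick in a direction transverse to $e_q$).

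The inductive bookkeeping: assuming $\varphi$ has been built on $\widehat{C}_{j-1}^w$ satisfying $(i)$--$(iii)$, I define $\varphi$ on a $j$-simplex $c$ by first applying the sliding homotopy $P_c$ described above (which is a strongly $\N u$-admissible $(j+1)$-chain, since it is built from the $e_q$-translation and strong admissibility is preserved under such translations by (\ref{eq def fortement admissible})), getting $\partial P_c = \varphi'(c) - c - P_{\partial c}$ where $\varphi'(c)$ lies in the union of the faces of $E$, hence in a cell decomposition with one fewer $w$-thick top cell of dimension $>j$; then iterate finitely many times (the barycentric subdivision of $\mathcal{E}$ has finitely many cells) until the image lands in $Y_j$; finally glue with the already-constructed $\varphi$ on the boundary, i.e. set $\varphi(c)$ equal to the terminal chain and let the total prism be the concatenation of the $P_c$'s. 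Extending $G$-linearly and checking $\varphi\partial = \partial\varphi$ is then the standard prism-operator identity. Property $(ii)$ follows because the total homotopy $P=\sum P_c$ is an $\N u$-admissible $(j+1)$-chain in $X$ with $\partial P = \varphi_\sigma - \sigma$ whenever $\sigma$ is a cycle (the $P_{\partial\sigma}$ terms assemble into $\partial(\text{lower prism})$, which vanishes on cycles after passing to homology). Property $(iii)$ is the same argument carried out inside $Y$: the sliding homotopies for a simplex supported in a union $Y$ of closed cells of $\F$ stay inside that union, because the slide of $c$ within a closed cell $cl(E)\subset Y$ never leaves $cl(E)$.

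The main obstacle I anticipate is making the "slide within a $w$-thick cell to its boundary" step genuinely well-defined as a definable homotopy that is simultaneously (a) strongly $\N u$-admissible, (b) compatible across adjacent cells so that the pieces $P_c$ glue along shared faces, and (c) does not increase the support's "thinness class" in a bad direction. The delicate point is (b): when a $j$-simplex of $\sigma$ straddles a face shared by a thick cell $E$ and a thin cell $E'$, the homotopy must be chosen to agree with the already-defined $\varphi$ on the restriction to that face, which forces a careful ordering of the cells (e.g. by dimension and then by the band hierarchy of the $L$-cell decomposition) and a corresponding careful definition of the $e_q$-direction used for each subsimplex — it must be the direction transverse to the face being pushed into. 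Once the ordering is fixed and one checks, using the $u$-admissibility of $\mathcal{E}$ and Lemma \ref{lem cell thick}, that in each thick band the relevant width is $\leq u$ (so the sliding displacement lies in $\N u$), properties $(i)$, $(ii)$, $(iii)$ should follow by the usual chain-homotopy formalism.
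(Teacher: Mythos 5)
There is a genuine gap, and it sits exactly at the step you call the ``key geometric move.'' First, the admissibility certificate you give for the slide is not the right one and is false where it matters: by $u$-admissibility of the cells (Definition \ref{def thin cells}), a band has width either $\leq u$ everywhere or $\geq u$ everywhere, so in the $w$-thick cells you are trying to empty the width is $\geq u$ and ``the displacement is controlled by the width of the band, which is $\leq u$'' simply fails. More fundamentally, smallness of the displacement of the homotopy is not what the statement requires; what must be verified is that each time-slice is a strongly $\N u$-admissible simplex in the sense of Definition \ref{def strongly admissible chains}, i.e.\ that differences $\sigma_t(x)-\sigma_t(x+\lambda e_q)$ along a fixed coordinate direction of the \emph{domain} $T_j$ lie in $\N u$ (note that the $e_q$ of strong admissibility is a direction in $T_j$, not an ambient direction of the cell $E$ along which one can ``slide,'' so the move is not even well defined as stated). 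The paper's proof does not slide the simplex across the band at all: it argues by induction on the ambient dimension $n$, writes each cell as a graph or a band over $k_v^{n-1}$, and snaps the last coordinate via a function $s_\sigma$ defined on vertices ($0$ if the vertex is $w$-close to the lower graph, $1$ otherwise) and extended linearly, followed by the induction-hypothesis homotopy on the first $n-1$ coordinates. The delicate case is precisely when the two endpoints of a domain edge are snapped to different graphs; there strong $w$-admissibility of $\sigma$ forces $\xi_{i+1}-\xi_i\in w$ at one point, hence $\xi_{i+1}-\xi_i\leq u$ on the whole cell by $u$-admissibility, and this is the only place where $u$-admissibility enters. Your proposal contains no analogue of this case analysis, and without it property $(2)$ (hence $(ii)$) is not secured.

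Second, ``push into the faces and iterate'' does not by itself land the chain in $Y_j$: iterating only lowers the dimension of the carrier cells, and you must stop once the image lies in $j$-dimensional faces, which may perfectly well be $w$-thick; a strongly admissible (hence thin) image inside a thick $j$-cell is \emph{not} contained in $Y_j$. The paper gets $(3)$ because the terminal map sends vertices onto vertices/graphs of the decomposition, so the final chain lies in cells of dimension at most $j$, and its strong $\N u$-admissibility then forces these carriers into $Y_j$; your construction has no mechanism guaranteeing thinness of the terminal carriers, only a dimension count. Finally, the compatibility problem you flag as the ``main obstacle'' is not incidental bookkeeping: the paper dissolves it structurally, since a chain of $\widehat{C}_j^w(X;\F)$ with $\F$ the closed cells has each simplex supported in a single closed cell ($C_j(X;\F)=\oplus_{F\in\F}C_j(F)$), and the homotopy is one explicit formula whose compatibility with $\partial$ comes from $s_{\partial\sigma}=\partial s_\sigma$ and $\nu_{\partial\sigma}=\partial\nu_\sigma$, with no ordering of cells and no induction on $j$. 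As it stands, your ordering-plus-gluing scheme, the admissibility of the thick-cell slide, and the identification of the terminal carriers with $Y_j$ are all left open, and together they constitute the substance of the proposition rather than routine details.
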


 \begin{proof}
 We are going to prove the following statements:

\noindent {\bf Claim.} Given $\sigma \in C_j (X;\F)$, there exists
a definable  homotopy $$h_\sigma: T_j(k_v) \times [0;1]_{k_v} \to
X,
$$
such that: \begin{enumerate}\item For each $x$ the path $t \mapsto
h_\sigma(x;t)$ stays in the same closed cell, \item For each $t$
the map $x \mapsto h_\sigma(x;t)$  is a strongly $\N u$-admissible
simplex if $\sigma$ is a strongly $w$-admissible simples, \item If
$\sigma$ is strongly $w$-admissible, the support of the simplex
$\varphi _\sigma:T_j(k_v) \to X$ defined by
$\varphi_\sigma(x)=h_{\sigma}(x;1)$ entirely lies in $Y_j $ \item
 We have $$\partial h_* (\sigma)-h_* (\partial\sigma) =\varphi_\sigma
-\sigma$$ for any $\sigma \in C_j  (X;\F)$ where (as usual)
$h_*:C_j(X;\F) \to C_{j+1}(X;\F) $ is the mapping induced by $h$
on the chain complexes.

 %induces a family of
%mappings $h_j: \widehat{C^w_j}(X;\F) \to C_{j+1} ^v(X;\F)$ which
%constitutes a chain homotopy between the identity and $\sigma
%\mapsto \sigma_1$.
%\item If $\sigma \in C_j^v(K'_j)$ (resp. $\sigma \in C_j^v(Y'_j)$)
%then $h(\sigma(x);t)$ belongs to $K_{j+1}$ (resp. $\sigma \in
%C_j^v(Y'_{j+1})$) for any $(x;t)\in T_j \times [0;1]$ .(really
%nec?)
\end{enumerate}

Note that $\varphi$ is defined by $(3)$. Observe that $(4)$
implies $(i)$,   together with $(2)$ implies $(ii)$, and together
with $(1)$ yields $(iii)$.

We prove that it is possible to construct such a homotopy by
induction on $n$ (the dimension of the ambient space). Let
$\mathcal{E}'$ be the cell decomposition of $k_v^{n-1}$
constituted by all the cells of $\mathcal{E}$ lying in
$k_v^{n-1}$. Let $\sigma$ in $C_j (X;\F)$ and  write
$\sigma:=(\widetilde{\sigma};\sigma_n)\in k_v ^{n-1} \times k_v$.
 Apply the induction hypothesis to $\widetilde{\sigma}$ and $\mathcal{E}'$ to get a homotopy
$h_{\widetilde{\sigma}}: T_j(k_v) \times
[0;1]_{k_v}\to k_v^{n-1}$.  %and write
%$\sigma(x)=(\widetilde{\sigma}(x);\sigma_n(x)) \in k_v^{n-1}\times
%k_v$.

By definition, the union of the  cells of $\mathcal{E}$ on which
$\pi_n$ is one-to-one is given by the graphs of finitely many
Lipschitz functions $\xi_1 \leq \dots \leq \xi_s$. Note that we
may retract the cells above (resp. below) the graph of $\xi_s$
(resp. $\xi_1$) onto the graph of $\xi_s$ (resp. $\xi_1$) so that
we may assume that $X$ entirely lies between these two graphs.

%Let $\sigma \in C_j (X)$ and set
%$\pi_n(\sigma)=\widetilde{\sigma}$.
By compatibility with $\F$ we know that the support of $\sigma$
entirely lies in one single cell $E \in \mathcal{E}$  which is
either the graph of a Lipschitz function $\xi$ or a band which is
delimited by the graph of the restriction to $E':=\pi_n(E)$ of two
consecutive functions $\xi_i$ and $\xi_{i+1}$, with $\xi_i <
\xi_{i+1}$ on $E'$. In the latter case, we may define a function
$\nu_\sigma :T_j(k_v) \to [0;1]_{k_v}$ by setting for $x \in
T_j(k_v)$
%$x=(\widetilde{x};x_n) \in k_v^{n-1}\times k_v$:
$$\nu_\sigma(x):=\frac{\sigma_n(x)-\xi_i(\widetilde{\sigma}(x))}{\xi_{i+1}(\widetilde{\sigma}(x))-\xi_i(\widetilde{\sigma}(x))}.$$

% Set also
%$$\nu_\sigma(x):=\nu(\sigma(x)).$$

 To deal with both cases simultaneously it is convenient to
set $\nu_\sigma (x)\equiv0$ and $\xi_i=\xi_{i+1}=\xi$, if the cell
is described by the graph of a single function $\xi$.  To define
$h_{\sigma}$ we first define a function $s_{\sigma}:T_j(k_v) \to
[0;1]_{k_v}$. We set:
\begin{eqnarray*} &s_\sigma(e_i)=0& \quad \mbox{if} \quad
\sigma_n(e_i)-\xi_i(\widetilde{\sigma}(e_i)) \in w \quad \,\,\; \mbox{and} \quad  \xi_{i+1}(\widetilde{\sigma}(e_i))-\sigma_n(e_i)\neq 0\\
\mbox{and} \:&s_\sigma(e_i)=1& \quad
\mbox{otherwise.}\end{eqnarray*} Then we extend $s_\sigma$ over
$T_j (k_v)$ linearly.

 Now we can set for $(x;t) \in
T_j(k_v) \times [0;\frac{1}{2}]_{k_v}$:
$$\theta(x;t)=2ts_\sigma(x)+(2t-1)\nu_\sigma (x).$$
 Set for simplicity: $\xi'=\xi_{i+1}-\xi_i$  and, for $x=(\widetilde{x};x_n)
\in k^{n-1} _v \times k_v$ and $t \in [0;1]_{k_v}$, let:
\begin{eqnarray*}&h_\sigma(x;t):=(\widetilde{\sigma}(x);\xi_i(\widetilde{\sigma}(x))+\theta(x;t)\xi'(\widetilde{\sigma}(x)))& \quad
\mbox{if} \quad t \leq \frac{1}{2}\\
&h_\sigma
(x;t):=(\,h_{\widetilde{\sigma}}(\widetilde{x};2t-1)\,;\,\xi_i(h_{\widetilde{\sigma}}(\widetilde{x};2t-1))
+s_\sigma
(x)\,\xi'(\,h_{\widetilde{\sigma}}(\widetilde{x};2t-1))\,)& \quad
\mbox{if} \quad t \geq \frac{1}{2}.
\end{eqnarray*}
Note that as $s_\sigma$ (resp. $\nu_{\sigma}$)  satisfies:
$$s_{\partial \sigma}=\partial s_\sigma $$ (resp. $\nu_{\partial \sigma}=\partial \nu_ \sigma
$), we see that the map induced by  $h_\sigma$ is a chain
homotopy. Moreover, it is clear from the definition of
$h_{\sigma}$ that the path $t \mapsto h_\sigma (x;t)$ remains in
the same closed cells. Therefore $(1)$ and $(4)$ hold.

To check $(2)$,  fix a strongly admissible simplex $\sigma$. We
have to check that there exists $q \in \{1,\dots,n\}$ such that:
\begin{equation}\label{eq preuve de (2)}(h_\sigma(x+\lambda
e_q;t)-h_\sigma(x;t)) \in \N u \end{equation} for any $(x;\lambda)
\in T_j^q(k_v)$ and any $t$ in $[0;1]_{k_v}$.

If $\sigma$ is the graph of one single function $\xi$ then the
result is immediate for $t \leq \frac{1}{2}$ and  follows from the
induction hypothesis for $t \geq \frac{1}{2}$.

%Thus, assume that $\sigma$ lies in a closed cell $E$ defined by
%two definable Lipschitz functions $\xi_i,\xi_{i+1}: E'\to k_v$
%with $E'=\pi_n(E)$ and $\xi_1 < \xi_{i+1}$ .
By definition of strongly admissible simplices there exists a
vector of the canonical basis, say $e_q$, such that:
\begin{equation}\label{eq sim fortement adm preuve hom}(\sigma(x)-\sigma(x+\lambda e_q)) \in w,\end{equation} for
any $(x;\lambda)\in T_j ^q (k_v)$. This implies that
\begin{equation}\label{eq sigma w admi}(\sigma(0)-\sigma(e_q)) \in w.\end{equation}

 We distinguish two cases:

\medskip

\noindent \underline{\it First case:}
$s_\sigma(0)=s_\sigma(e_q)$. This implies %by $??$ that  and
%therefore by definition of $s_\sigma$ we have:
%$$s_\sigma(0)=s_\sigma(e_q),$$
%which of course implies
 that for any $(x;\lambda) \in T_j ^q(k_v)$ we have
$$s_\sigma(x)=s_\sigma(x+\lambda e_q),$$
 and therefore \begin{equation}\label{eq
theta}|\theta(x)-\theta(x+\lambda e_q)|\leq |\nu_\sigma (x) -\nu
_\sigma (x +\lambda e_q)|.\end{equation} Note that if
$\xi'(\widetilde{\sigma}(x)) \in w$ then
$\xi'(\widetilde{\sigma}(x+\lambda e_q)) \in w$, which means that
in this case (\ref{eq preuve de (2)}) follows immediately from
(\ref{eq sim fortement adm preuve hom}) for $t \leq \frac{1}{2}$.
Otherwise $\xi'(\widetilde{\sigma}(x)) \notin w$ and then by
(\ref{eq sim fortement adm preuve hom}):
\begin{equation}\label{eq xi'}\frac{1}{2}\xi'(\widetilde{\sigma}(x))\leq \xi'(\widetilde{\sigma}(x+\lambda e_q)
)\leq 2\,\xi'(\widetilde{\sigma}(x)).\end{equation}

  Recall that the functions $\xi_i$ and $\xi_{i+1}$ are both  Lipschitz functions.
  Hence, if $\sigma$ is strongly admissible, for $t \leq \frac{1}{2}$ a straightforward computation shows that thanks to (\ref{eq theta}) and
  (\ref{eq xi'}) we have for any $(x;\lambda) \in T_j^q(k_v)$:
\begin{equation}\label{eq h fortment admissible}
(h_\sigma(x+\lambda e_q;t)-h_\sigma(x;t)) \in w.
\end{equation}
%Observe also
%  that, given $(x;\lambda) \in T_j (k_v)$, the functions
%  $\xi_i(h_{\widetilde{\sigma}}(x;t)))$, $i=1,2$, are constant until at least
%  $t=\frac{1}{2}$

 For $t \geq \frac{1}{2}$, (\ref{eq preuve de (2)}) still  holds
thanks to the induction hypothesis and the Lipschitzness of
$\xi_i$ and $\xi_{i+1}$.

\medskip

\noindent \underline{\it Second case:} $s_\sigma(0)\neq
s_\sigma(e_q)$. In this case we observe that if $s_\sigma(0)$ is
$0$ then $$(\sigma_n(0)-\xi_i(\widetilde{\sigma}(0)) )\in w$$
which amounts to
$$d(\sigma(0); \Gamma_{\xi_i}) \in w,$$
(where $\Gamma _{\xi_i}$ denotes the graph of $\xi_i$). By
(\ref{eq sigma w admi}), this implies that  $d(\sigma(e_q);
\Gamma_{\xi_i})$ belongs to $ w$ and so
$$(\sigma_n(e_q)- \xi_i(\widetilde{\sigma}(e_q)) \in w.$$

As  $s_\sigma(e_q)$ is necessarily equal to $1$ we see that
$$\sigma_n(e_q)-\xi_{i+1}(\widetilde{\sigma}(e_q)) =0$$
so that $$\xi'(\widetilde{\sigma}(e_q)) \in w.$$ %This
%implies that the cell $E$ is not $u$-thick. But, by assumption
%every cell is either $u$-thin or $u$-thick.
But, as the cell $E$ is $u$-admissible this implies that for {\it
any} $x \in E'$:
$$\xi'(x) \leq u.$$

This, together with the induction hypothesis, implies that
$h_\sigma$ satisfies (\ref{eq preuve de (2)}). This completes the
proof of $(2)$.
\medskip

\bigskip
It remains to prove $(3)$. First observe that all the $e_j$'s are
sent by $\varphi_\sigma$ onto vertices of $E$.  Note also that
$$\varphi_{\sigma}(x)=(\varphi_{\widetilde{\sigma}}(x);\xi_i(\varphi_{\widetilde{\sigma}}(x))+s_\sigma(x)\xi'(\varphi_{\widetilde{\sigma}}(x)))$$
and so, by the definition of the cells, the support of
$\varphi_\sigma$  lies in  cells of dimension at most $j$ of
$\mathcal{F}$. Moreover we just checked that (\ref{eq preuve de
(2)}) holds in any case. This implies that $\varphi_\sigma$ is
strongly admissible and therefore its support must lie in $Y_j$.
This completes the proof of the claim.
%We now check $(5)$. As every simplex is projected by $\pi_n$ onto
%a simplex of $k_v^{n-1}$ it is clear from the definition of $h$
%that $h(x;t)$ will not leave $K_{j-1}$ is $x$ belongs to this set.
%So we may assume that $\sigma$ is admissible. Let us start by the
%case where $\pi_n(\sigma)$ is admissible. Then clearly:
%$$\pi_n^{-1}(\pi_n(Y'j)) \subset Y'_{j+1}.$$
%Therefore $h$ stays in $Y'_{j+1}$ until at least $t=\frac{1}{2}$.
%For $t \geq \frac{1}{2}$ observe   the induction hypothesis
%implies that $h_{\pi_n(\sigma)}$ stays in $Y''_j$. But $Y_j'
%\subset \pi_n^{-1}(Y''_j)$.
%
%In the case where $\pi_n(\sigma)$ is not admissible then
%$\pi_{n|\sigma}$ is not a Lipschitz homeomorphism and so $\dim
%\pi_n(\sigma)< \dim \sigma$. This also implies that the cell $C'$
%in which $\sigma$ lies is defined by two functions $\xi_1:C' \to
%k_v$ and $\xi_2:C' \to k_v$, with $C' \subset k_v^{n-1}$ and:
%$$\xi_2(x)-\xi_1(x) \in w.$$
% But this means that: $$\pi_n^{-1}(\theta) \cap C \subset Y'_{j+1},$$ for any
% $\theta \in C'$. As $h_{\pi_n(\sigma)}$ stays in $C'$ we see that
% $h$ does not leave $Y'_{j+1}$.
\end{proof}
\medskip
We are now able to  express the $v$-vanishing homology groups in
terms of the (usual) homology groups of some $v$-thin subsets
constituted by the $v$-thin cells of the barycentric subdivision
of some $L$-cell decompositions.

\begin{thm}\label{thm simplicial is singular}
For any $ X\subset R^n$  closed definable, there exist some
definable subsets of $X$: $$X_0 \subset \dots \subset X_{d+1}
=X_d$$ such that:
$$H_j ^v(X)\simeq Im (H_j(X_j)\to H_j(X_{j+1})) $$
(where the arrow is induced by inclusion and $Im$ stands for
image).
\end{thm}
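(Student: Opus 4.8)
The plan is to pass to the extension $k_v$ and use the machinery built up in Sections $1$ and $2$. By the isomorphism $ext: C_j^v(X) \to \Delta_j^{ext}(X_v)$ together with Lemma \ref{lem w, ext et nu}, we have $H_j^v(X) \simeq H_j^w(X_v) \simeq H_j^u(X_v)$, so it suffices to compute the right-hand side in $k_v$. First I would apply Proposition \ref{prop cell dec} (absorbing the resulting bi-Lipschitz homeomorphism, which does not change the vanishing homology) to obtain a $u$-admissible $L$-cell decomposition $\mathcal{E}$ of $k_v^n$ compatible with $X_v$. Take $\F$ to be the family of closed cells of $\mathcal{E}$, so by \eqref{eq subdivision hom} we have $H_j^w(X_v) \simeq H_j^w(X_v;\F)$, and similarly for $u$. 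Let $Y_j$ be the union of the closures of the $(\N u;j)$-thin cells of the barycentric subdivision of $\mathcal{E}$; these form an increasing chain $Y_0 \subset Y_1 \subset \dots$ since a $(\N u;j)$-thin set is $(\N u;j{+}1)$-thin, and after $d=\dim X$ steps it stabilizes. The sets $X_j$ in the statement will be (the images under the inverse homeomorphism of, or rather the descent to $R$ of) these $Y_j$.

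The core of the argument is to show $H_j^u(X_v) \simeq \mathrm{Im}\big(H_j(Y_j) \to H_j(Y_{j+1})\big)$, where the homology on the right is the ordinary one; this works because $Y_j$ and $Y_{j+1}$ are $(\N u; j)$- and $(\N u; j{+}1)$-thin respectively, so by Remark \ref{rmk x vthin vhom is hom} their $u$-vanishing homology in degree $j$ agrees with ordinary homology. I would combine two facts. On the one hand, Lemma \ref{lem strong adm is onto} says every class in $H_j^w(X_v;\F)$ has a strongly $w$-admissible representative, and Proposition \ref{prop homotopie} produces the chain map $\varphi: \widehat{C}_j^w(X_v;\F) \to \widehat{C}_j^u(Y_j)$ with $\varphi\partial = \partial\varphi$ and $\varphi_\sigma = \sigma$ in $H_j^u(X_v)$ for cycles $\sigma$. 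Thus the image of $H_j^w(X_v;\F)$ in $H_j^u(X_v)$ — which is all of it — factors through $H_j^u(Y_j) = H_j(Y_j)$, giving a surjection $H_j(Y_j) \twoheadrightarrow H_j^u(X_v)$. On the other hand, property $(iii)$ of Proposition \ref{prop homotopie} controls the relations: the composite $H_j(Y_j) \to H_j^u(X_v)$, when restricted to classes carried in a union $Y$ of closed cells, only sees the class modulo what bounds in $Y$. Taking $Y = Y_{j+1}$ and noting that the boundary of any $(j{+}1)$-cell of the barycentric subdivision lies in the $j$-skeleton, every $(j{+}1)$-chain of $\widehat{C}_{j+1}^u$ bounding in $X_v$ already bounds in $Y_{j+1}$; hence the kernel of $H_j(Y_j) \to H_j^u(X_v)$ is exactly the kernel of $H_j(Y_j) \to H_j(Y_{j+1})$. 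This identifies $H_j^u(X_v) \simeq \mathrm{Im}(H_j(Y_j) \to H_j(Y_{j+1}))$.

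It remains to descend from $k_v$ back to $R$. The cells $Y_j$ are $\la_R(u)$-definable; viewing $u$ as a variable ranging over an interval $[a;b]_R$ with $a < u < b$, the families $Y_j$ spread out to $\la_R$-definable families over $[a;b]$, and by o-minimal triviality (and Lemma \ref{lem N u egal N t}, which says the $w$-vanishing homology of a nearby fiber matches the $u$-vanishing homology of the restricted family) the homology of the fiber over a suitable $t \in v$ computes $H_j^u(X_v)$. Setting $X_j$ to be that fiber (pulled back by the inverse of the composite homeomorphism, so that $X_j \subset X$) yields the claimed chain $X_0 \subset \dots \subset X_{d+1} = X_d$ with $H_j^v(X) \simeq \mathrm{Im}(H_j(X_j) \to H_j(X_{j+1}))$.

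The main obstacle I anticipate is the bookkeeping in the second paragraph: verifying that the chain map $\varphi$ from Proposition \ref{prop homotopie} is not merely surjective onto $H_j^u(X_v)$ but that its kernel is precisely detected by the inclusion $Y_j \hookrightarrow Y_{j+1}$. This requires checking that a strongly admissible $(j{+}1)$-chain witnessing a relation can itself be pushed (via the homotopy, or a degree-raised version of it) into $Y_{j+1}$ without destroying admissibility — essentially applying property $(iii)$ one dimension up — and confirming that no class which is nonzero in $H_j(Y_j)$ but zero in $H_j^u(X_v)$ escapes this description. The definability/triviality descent in the last paragraph is routine o-minimality once the $k_v$ statement is in hand, so the weight of the proof is on correctly assembling Lemma \ref{lem strong adm is onto} and Proposition \ref{prop homotopie} into the exactness statement.
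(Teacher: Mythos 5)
Your toolkit is the right one (pass to $k_v$, identify $H_j^v(X)$ with $H_j^u(X_v)$ via $ext$ and Lemma \ref{lem w, ext et nu}, extract thin skeleta from a $u$-admissible $L$-cell decomposition, and combine Lemma \ref{lem strong adm is onto} with Proposition \ref{prop homotopie}), and your surjectivity argument is essentially the paper's. The gap is the kernel identification, and it is not mere bookkeeping. If $\sigma$ is a cycle carried by $Y_j$ that bounds a $u$-admissible chain $\tau$ in $X_v$, your justification (``the boundary of a $(j{+}1)$-cell lies in the $j$-skeleton, so $\tau$ already bounds in $Y_{j+1}$'') does not apply: $\tau$ is a singular chain whose support may run through $w$-thick cells of any dimension, and the only available device for pushing it into $Y_{j+1}$ is Proposition \ref{prop homotopie}, whose domain is $\widehat{C}^w_j(\,\cdot\,;\F)$, i.e.\ strongly \emph{$w$}-admissible chains subordinated to the closed cells of $\mathcal{E}$. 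Two hypotheses fail in your setting. First, $Y_j$ is only $\N u$-thin, not $w$-thin (a band of width $u$ over a $w$-thick base is $\N u$-thin but $w$-thick, since $u \notin w$), so your cycle $\sigma$ need not be $w$-admissible at all: it is not in the domain of $\varphi$, and it cannot be rectilinearized at the scale $w$ by Proposition \ref{prop caract v thin sets}. Second, property $(iii)$ of Proposition \ref{prop homotopie} requires the set $Y$ to be a union of elements of $\F$ (closed cells of $\mathcal{E}$), whereas your $Y_{j+1}$ is a union of closures of cells of the barycentric subdivision, so invoking $(iii)$ with $Y=Y_{j+1}$ does not meet the hypothesis. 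As written, the inclusion $\ker\bigl(H_j(Y_j)\to H_j^u(X_v)\bigr)\subset\ker\bigl(H_j(Y_j)\to H_j(Y_{j+1})\bigr)$ is left unproved, and this is precisely the hard half of the theorem.

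This is also why the paper does not argue entirely upstairs with a single decomposition and descend only at the end. It defines $X_j$ as a nearby fiber $Y_{j,t}$ with $t\in v$, so that $X_{j,v}$ is $w$-thin and its cycles are genuinely $w$-admissible; it transfers between the fiber at $t$ and the fiber at $u$ by Lemma \ref{lem N u egal N t}; and, crucially, it builds the cell decompositions \emph{inductively}, invoking Proposition \ref{prop cell dec} again at stage $j{+}1$ so that $\mathcal{E}_{j+1}$ is compatible with $X_{j,v}$, hence $X_{j,v}$ is a union of its cells and lies in $Y_{j+1,u}$. Then a further $w$-admissible rectilinearization converts the bounding chain into a strongly $w$-admissible simplicial chain $\tau'$ with $\partial\tau'=\sigma'$, and $(iii)$ is applied with $Y=X_{j,v}$ to conclude that $\sigma$ dies in $H_j^u(Y_{j+1,u})$. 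In short, the passage to fibers at $t\in v$ and the renewed, compatible decomposition are needed \emph{inside} the injectivity argument, not merely as a final routine o-minimal descent; your plan postpones exactly the ingredients that make the kernel step work.
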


\begin{proof}
 We start by defining inductively the subsets $ X_j $ 's. Set $X_0=\emptyset$ and assume that $X_0,\dots,X_{j-1}$  have already been
 defined.
  According to Proposition \ref{prop cell dec}, up to
a  bi-Lipschitz homeomorphism, we can assume that we have a
$u$-admissible $L$-cell decomposition compatible with $X_v $ and
$X_{j-1,v}$. Let $\mathcal{E}_j$ be the barycentric subdivision of
this cell decomposition and define $\Theta_j$ as  the union of all
the
   $(j;\N u)$-thin cells. There exists a $\la_R$-definable family $Y_{j}$ such that
$Y_{j,u}=\Theta_{j}$.  Now, thanks to Lemma \ref{lem N u egal N
t}, there exists $z$  in $v$, such that for any $t $ in $ v $
greater than $z$:
$$H_j^{  w}(Y_{j,t})\simeq H_j^{  u}(Y_{j,u}).$$  %Furthermore, thanks to topological triviality of
%the families $Y_j$, we have a commutative diagram:
%$$H_j(Y_j) \simeq H_j(Y_{j,t})$$ for all $j=0,\dots,n$.  %Define also $K_j$ as the union of all the cells
%%of dimension $l \leq j $.

Now  define $X_j$ as the subset of $R^n$ defined by a
$\la_R$-formula defining $Y_{j,t}$ for some $t \geq z$ in $v$.  If
$t$ is chosen large enough, $X_{j}$ is $ v$-thin. As bi-Lipschitz
homeomorphisms induce isomorphisms between the vanishing homology
groups, we identify subsets with their image so that, for
instance, we consider below the $X_{j,v}$'s and
$Y_{j,u}$ as  subsets of $X_v$. %If $t$ is chosen
%sufficiently big there exists a topological trivialization:
%$$\lambda_j : Y_{j,u} \times [t;u] \to Y_{j,[t;u]}.$$
%This mapping induces an isomorphism on from $H_j ^w (Y_{j,t})$ to
%$H_j ^u (Y_j)$ which
%
%
%\begin{center}
%     \begin{picture}(-140,0)
%%\put(-120,3)%{$1$} \put(-120,-47)%{$4$}
%%\put(-173,-25)%{$3$}\put(-55,-25)%{$2$}
%      \put(-180,-3){$H_j ^v(X_j)$}
%        \put(-90,-3){$H_j^v (X_j)$}
%      \put(-190,-52){$H_j (X_{j})$}
%          \put(-90,-52){$H_j  (X_{j})$}
%%      \put(-90,15){$ e_{1} $}
%%      \put(-90,-45){$e_2$}
%      \put(-140,-50){\vector(3,0){40}}
%      \put(-140,0){\vector(3,0){40}}
%  \put(-165,-10){\vector(0,-1){30}}
%      \put(-60,-10){\vector(0,-1){30}}
%     % \put(-130,-40){$h_2$}
%%      \put(-130,15){$h_1$}
%     \end{picture}
%    \end{center}
%\vskip 20mm
%Apply again Proposition \ref{prop cell dec} to get an
%$L$-cell decomposition  (up to a  bi-Lipschitz homeomorphism)
%compatible with $X_v$ as well as all the $Y_{j,a}$'s,
%$j=1,\dots,d$ such that each cell is either $u$-thin or $u$-thick.
%
% Denote by $Y'_{j+1}$ the union of all the cells which are
%$u$-thin.

Consider the following diagram:
$$Im\{H_j(X_j) \to H_j(X_{j+1}) \} \overset{a}{\leftarrow} Im\{H_j^{  v}(X_{j}) \to H^{  v}_j(X_{j+1})\}\overset{b}{\rightarrow} H_j^v (X),$$
where again $a$ and $b$ are induced by the   inclusions of the
corresponding chain complexes. We shall show that $a$ and $b$ are
both isomorphisms.

%We are going to prove the theorem in two steps. In the first step
%we show it for $\widehat{H}_j^v(X)$, that is to say we show that
%we have:\begin{equation}\label{eq step 1 H chapeau}\widehat{H}_j
%^w(X_v;\F)\simeq Im (H_j(Y_j)\to H_j(Y_{j+1})).\end{equation} In
%the second step we show that we have:
%\begin{equation}\label{eq H=H chapeau}\widehat{H}^v_j(X;\F) \simeq H_j ^v(X)\end{equation}
%Together  with the first step we clearly get the result.

\medskip

\noindent a is an isomorphism:    We have the following
commutative diagram:
%$H_j(X_{j})=H^{ t}_j(X_{j})$ (diagram).
\begin{center}
     \begin{picture}(-140,0)
%\put(-120,3)%{$1$} \put(-120,-47)%{$4$}
%\put(-173,-25)%{$3$}\put(-55,-25)%{$2$}
      \put(-180,-3){$H_j ^v(X_{j})$}
        \put(-90,-3){$H_j^v (X_{j+1})$}
      \put(-180,-52){$H_j (X_{j})$}
          \put(-90,-52){$H_j  (X_{j+1})$}
%      \put(-90,15){$ e_{1} $}
%      \put(-90,-45){$e_2$}
      \put(-140,-50){\vector(3,0){40}}
      \put(-140,0){\vector(3,0){40}}
  \put(-165,-10){\vector(0,-1){30}}
      \put(-70,-10){\vector(0,-1){30}}
     % \put(-130,-40){$h_2$}
%      \put(-130,15){$h_1$}
     \end{picture}
    \end{center}
\vskip 18mm where all the maps are induced by inclusion. By Remark
\ref{rmk x vthin vhom is hom}, the first  vertical arrow is an
isomorphism and the second
is one-to-one. This proves that $a$ is an isomorphism. % Now, as
%$Y_{j+1}$ is $v$-thin, %$\tau \in D_{j+1}(Y_{j+1})$.
%  As
%$X_{j+1}$ is $  t$-thin, every $(j+1)$ chain of $X_{j+1}$ is
%admissible and therefore
  %so that
%  $a$ is clearly one-to-one.
\bigskip

\noindent $b$ is onto: Note that it is enough to prove that the
inclusion $X_j \to X$ induces an onto map between the
$v$-vanishing homology groups.

We have the following commutative diagram:

\begin{center}
     \begin{picture}(-140,0)

\put(103,-25){$\mbox{diag.} $\, 1.$$} \put(-130,-47){$ext$}
%\put(-55,-25){$4$}
\put(-130,3){$ext$}
      \put(-180,-3){$H_j ^v(X_j)$}
      \put(-93,-3){$H_j (\Delta ^{ext}(X_{j,v}))$}
          \put(-93,-52){$H_{j+1} (\Delta ^{ext}(X_v))$}
          \put(-180,-52){$H_j ^v(X)$}
           \put(30,-3){$H_j ^w(X_{j,v})$}
          \put(30,-52){$H_j ^w(X_v)$}
%      \put(-90,15){$ e_{1} $}
%      \put(-90,-45){$e_2$}

      \put(-140,0){\vector(3,0){40}}
      \put(-140,-50){\vector(3,0){40}}
\put(-15,0){\vector(3,0){40}}
 \put(-15,-50){\vector(3,0){40}}

  \put(-165,-10){\vector(0,-1){30}}
    %  \put(-60,-10){\vector(0,-1){30}}
       \put(40,-10){\vector(0,-1){30}}
     % \put(-130,-40){$h_2$}
%      \put(-130,15){$h_1$}
     \end{picture}
    \end{center}
\vskip 20mm where the mapping $ext$, provided by extension of
chains, is an isomorphism (see section $3.1$).

By Lemma \ref{lem w, ext  et nu} the latter horizontal arrows are
isomorphisms as well. Therefore, it is enough to prove that the
map induced by inclusion $H_j ^w (X_{j,v}) \to H_j ^w (X_v)$ (the
last vertical arrow) is onto.

 For $t \geq z$ in $v$, let $\alpha$ and $\beta$ be the maps defined by
inclusion:
$$H_j ^w(Y_{j,t}) \overset{\alpha}{\to} H_j
(Y_{j,[z;u]})\overset{\beta}{\leftarrow} H_j ^u (Y_{j,u}).$$

 By Lemma \ref{lem N u egal N t}, $\alpha$ and $\beta$
are isomorphisms so that   $\gamma:= \beta^{-1} \alpha$ provides
the following commutative diagram:
%\newpage
\begin{center}
     \begin{picture}(-140,0)
%\put(-120,3){$2$} \put(-120,-47){$\gamma$}
\put(-173,-25){$\gamma$}%\put(-55,-25){$4$}
      \put(-190,-3){$H_j ^w(Y_{j,t})$}
        \put(-90,-3){$H_j ^w(X_v)$}
      \put(-190,-52){$H_j ^u(Y_{j,u})$}
          \put(-90,-52){$H_j ^u(X_v)$}
%      \put(-90,15){$ e_{1} $}
%      \put(-90,-45){$e_2$}
      \put(-140,-50){\vector(3,0){40}}
      \put(-140,0){\vector(3,0){40}}
  \put(-165,-10){\vector(0,-1){30}}
      \put(-75,-10){\vector(0,-1){30}}
     % \put(-130,-40){$h_2$}
%      \put(-130,15){$h_1$}
     \end{picture}
    \end{center}
\vskip 18mm

By Lemma \ref{lem w, ext et nu} the second vertical arrow is onto.
Thus, it is enough to show that $H_j ^{ u} (Y_{j,u}) \to H_j ^{
u}(X_v)$ is onto. By construction,  $Y_{j,u}$ is the union of all
the $(j;\N u)$-thin closed cells of the barycentric subdivision of
$\mathcal{E}_j$. Note that it is enough to consider a chain
$\sigma \in\widehat{ Z}_j^w(X_v;\F)$ where $\F$ is the family
constituted by all the closure of the cells  of $\mathcal{E}_j$
(since the inclusion $\widehat{H}_j ^{ w} (X_{v};\F) \to H_j ^{
u}(X_v)$ is onto, thanks to Lemmas \ref{lem w, ext  et nu} and
\ref{lem strong adm is onto}).  By $(ii)$ of Proposition \ref{prop
homotopie},  there exists $\varphi_\sigma \in C_j^u(Y_{j,u})$ such
that $\sigma =\varphi_\sigma $ in $H_j^{ u}(X_v)$, as required.
\medskip

\noindent $b$ is one-to-one: Note that as diag. $1.$ holds for
$X_{j+1}$ as well (and the horizontal arrows are isomorphisms as
well), it is enough to show that the map induced by inclusion $$b'
:Im(H^w _j (X_{j,v}) \to H_j ^w (X_{j+1,v})) \to H_{j} ^w (X_v)$$
is one-to-one. Recall that by definition $X_{j+1,v}$ is
$Y_{j+1,t}$, for some $t$ and consider the following commutative
diagram:

%\begin{center}
%     \begin{picture}(-140,0)
%
%%\put(-173,-25){$\gamma$}
%\put(-130,-47){$ext$}
%%\put(-55,-25){$4$}
%\put(-130,3){$ext$}
%       \put(-280,0){$Im(H^w _j (X_{j,v}) \to H_j ^u (Y_{j+1,u}))$}
%      \put(-93,-3){$Im (H_j (\Delta ^{ext}(X_{j,v})) \to H_j (\Delta ^{ext}(X_{j+1,v} ))$}
%          \put(-93,-52){$H (\Delta_j ^{ext}(X_v))$}
%          \put(-220,-52){$H_j ^v(X)$}
%           \put(130,-3){$H_j ^w(X_{j,v})$}
%          \put(20,-52){$H_j ^w(X_v)$}
%%      \put(-90,15){$ e_{1} $}
%%      \put(-90,-45){$e_2$}
%
%      \put(-140,0){\vector(3,0){40}}
%      \put(-180,-50){\vector(3,0){80}}
%\put(100,0){\vector(3,0){30}}
% \put(-25,-50){\vector(3,0){40}}
%
%  \put(-205,-10){\vector(0,-1){30}}
%      \put(-60,-10){\vector(0,-1){30}}
%       \put(130,-10){\vector(0,-1){30}}
%     % \put(-130,-40){$h_2$}
%%      \put(-130,15){$h_1$}
%     \end{picture}
%    \end{center}
%\vskip 20mm
\begin{center}
     \begin{picture}(-140,0)
%\put(-173,-25){$\gamma$}
\put(-3,-25){$\nu_t$}
 %\put(-15,3){$b'$}
 \put(-55,-47){$\nu_u$}
%\put(-111,-25){$\mu$}%\put(-152,-25){$\lambda_{j+1|X_{j+1},*}$}
      %\put(-180,-3){$H_j ^v(X_j)$}
      \put(-125,-3){$H^w _j (X_{j,v})$}
          \put(-130,-50){$H_j ^u (Y_{j+1,u})$}
          %\put(-180,-52){$H_j ^v(X)$}
           \put(-20,-3){$  H_j ^w (Y_{j+1,t})$}
          \put(-20,-52){$H_j ^u(Y_{j+1,[z;u]})$}
%      \put(-90,15){$ e_{1} $}
%      \put(-90,-45){$e_2$}

      %\put(-140,0){\vector(3,0){40}}
      %\put(-140,-50){\vector(3,0){40}}
\put(-75,0){\vector(3,0){50}}
 \put(-75,-50){\vector(3,0){50}}

  %\put(-165,-10){\vector(0,-1){30}}
      \put(-100,-10){\vector(0,-1){30}}
       \put(-5,-10){\vector(0,-1){30}}
     % \put(-130,-40){$h_2$}
%      \put(-130,15){$h_1$}
     \end{picture}
    \end{center}
    \vskip2cm
    where again $\nu_u$ and $\nu_t$ are induced by the respective
    inclusions. By Lemma \ref{lem N u egal N t} these maps are
    one-to-one.

 This implies that  we have the following commutative  diagram:
\begin{center}
     \begin{picture}(-140,0)
%\put(-173,-25){$\gamma$}
%\put(42,-25){$Id$}
 \put(-15,3){$b'$}
 \put(-15,-47){$b''$}
\put(-111,-25){$\mu$}%\put(-152,-25){$\lambda_{j+1|X_{j+1},*}$}
      %\put(-180,-3){$H_j ^v(X_j)$}
      \put(-170,-3){$Im(H^w _j (X_{j,v}) \to H_j ^w (Y_{j+1,t}))$}
          \put(-170,-50){$Im(H^w _j (X_{j,v}) \to H_j ^u (Y_{j+1,u}))$}
          %\put(-180,-52){$H_j ^v(X)$}
           \put(30,-3){$H_j ^w(X_{v})$}
          \put(30,-52){$H_j ^u(X_v)$}
%      \put(-90,15){$ e_{1} $}
%      \put(-90,-45){$e_2$}

      %\put(-140,0){\vector(3,0){40}}
      %\put(-140,-50){\vector(3,0){40}}
\put(-25,0){\vector(3,0){50}}
 \put(-25,-50){\vector(3,0){50}}

  %\put(-165,-10){\vector(0,-1){30}}
      \put(-100,-10){\vector(0,-1){30}}
       \put(40,-10){\vector(0,-1){30}}
     % \put(-130,-40){$h_2$}
%      \put(-130,15){$h_1$}
     \end{picture}
    \end{center}
\vskip 20mm where all the horizontal arrows  are induced by the
corresponding inclusions and $\mu$ is induced by  the restriction
of $\nu_u ^{-1}\nu_t $. Since $\mu$ is one-to-one, it is enough to
show that $b''$ is one-to-one.

To check that $b''$ is one-to-one, take $\sigma$ in $Z_j^{
w}(X_{j,v})$ which bounds a chain of $C_{j+1} ^u (X_v)$. As the
inclusion $H_j ^w (X_v) \to H_j ^u (X_v)$ is an isomorphism, there
exists $\tau$ in $C_{j+1} ^{w}(X_v)$ such that $\sigma =\partial
\tau$.  Consider a $w$-admissible rectilinearization of $|\tau|$,
$|\sigma|$ and $\F$ where $\F$ is the family constituted by  the
closure the cells of the barycentric subdivision of
$\mathcal{E}_{j+1}$. The chain $\sigma$ is equal in $H_j
(X_{j,v})\simeq H_j^w(X_{j,v})$ (see Remark \ref{rk bar subd
rect}) to a simplicial chain $\sigma'$ which is strongly
$w$-admissible and compatible with $\F$. The class of the chain
$\sigma$ is zero in $H_j(|\tau|)$ and therefore $\sigma'$ bounds a
simplicial chain $\tau'$ which is also strongly $w$-admissible
(again by Remark \ref{rk bar subd rect}). %This implies that $\sigma_v
%=\partial \tau_v$ in $H_j(X_v)$.

 By construction,    $X_{j,v}$ is a union
of cells of $\mathcal{E}_{j+1}$ and  the union of all the closure
of  the cells of  dimension $(j+1)$ of the barycentric subdivision
of $\mathcal{E}_{j+1}$ which are $(j+1;\N u)$-thin is precisely
$Y_{j+1,u}$. Therefore we may apply Proposition \ref{prop
homotopie} to $X_v$.
 This provides a map $\varphi:
 \widehat{C} _j ^w (X_v;\F) \to\widehat{C}_j ^u (Y_{j+1,u};\F)$
 such that
   $$\partial \varphi _{\tau'}=\varphi_{\partial \tau'}=\varphi_{\sigma'}.$$

   As by $(iii)$ of this proposition
  $\sigma'=\varphi_{\sigma'}$
   in $H_j ^u (X_{j,v})$, this implies that the class of $\sigma$ is zero in
 $H_j^u
 (Y_{j+1,u})$ and yields that $b''$ is one-to-one.
%Thus, the chain
% $\sigma$ is zero in $H_j ^u (Y_{j+1,u})$. %As $Y_{j+1,u}$ is $\N
% u$-thin, the natural mapping $H_j ^u (Y_{j+1,u}) \to H_j
% (Y_{j+1,u})$is one-to-one (see Remark \ref{rmk x vthin vhom is
% hom}).
  %The chains $\sigma_1
% '$ and $\tau'_1$ give rise to a family of chains $\sigma''_z \in C_j ^z (X_j) $ and
% $\tau_z '' \in C_{j+1} ^z (Y_{j+1,z} $, for $z \in [a;b]_R$ with $a \in v$ and $b \notin v$  satisfying
% $\partial\tau_z'' =\sigma_z''$. Furthermore, since $\sigma_v$ and $X_{j,v}$
% are  extensions, we certainly still have $\sigma =\sigma_z  ''$ in
% $X_j$.
  %But,
%since $\sigma'_1$ bounds $\tau'_1$,
%But by construction $X_{j+1}=Y_{j+1,t}$ for some $t$. It means
%that the class of $\sigma'$ in $H_j^{ \N u}(X_{j+1,t})$ is zero.
%as required. (voir si y a pas plus simple)
\end{proof}
\medskip

\begin{cor}\label{cor groups finitely gen}
For any closed definable subset $X$, the vanishing homology groups
$H_j ^v(X)$ are finitely generated.
\end{cor}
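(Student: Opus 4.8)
The plan is to deduce the corollary directly from Theorem \ref{thm simplicial is singular}. That theorem gives, for any closed definable $X \subset R^n$, a chain of definable subsets $X_0 \subset \cdots \subset X_{d+1} = X_d$ together with an isomorphism
$$H_j ^v(X) \simeq \mathrm{Im}\bigl(H_j(X_j) \to H_j(X_{j+1})\bigr),$$
where the arrow is induced by inclusion and the homology on the right is the \emph{ordinary} (definable) homology. So the statement reduces to the fact that ordinary definable homology groups of definable sets over $R$ are finitely generated $G$-modules, and that a submodule (here, the image of a map between such groups) inherits finite generation.

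First I would recall why ordinary definable homology is finitely generated: any definable set over a real closed field $R$ admits a finite definable triangulation (this is standard in o-minimal geometry, see \cite{coste} or \cite{vds2}), so its simplicial chain complex is built from finitely many simplices in each degree; hence each $C_j$ is a finitely generated free $G$-module when $G$ is, and the homology of a complex of finitely generated modules is finitely generated — at least when $G$ is Noetherian, e.g. $G = \Q$ or $G = \Z$, which is the standing situation in the paper. Thus both $H_j(X_j)$ and $H_j(X_{j+1})$ are finitely generated, and the image of the inclusion-induced map $H_j(X_j) \to H_j(X_{j+1})$ is a quotient of $H_j(X_j)$, hence finitely generated as well. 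Via the isomorphism of Theorem \ref{thm simplicial is singular}, $H_j ^v(X)$ is therefore finitely generated, completing the proof.

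The only genuine subtlety — and the point I would be careful to address — is whether one needs to worry about the coefficient group $G$ being arbitrary abelian. If $G$ is arbitrary, "finitely generated" should be understood in the appropriate sense: the homology of a complex of finitely generated free modules over $\Z$ (the simplicial complex of a finite triangulation, with coefficients) is a finitely generated $G$-module, since $H_j(C_\bullet \otimes_\Z G)$ fits in a universal coefficient sequence with $H_j(C_\bullet; \Z) \otimes_\Z G$ and $\mathrm{Tor}(H_{j-1}(C_\bullet;\Z), G)$, both finitely generated over $G$ because $H_*(C_\bullet; \Z)$ is finitely generated over $\Z$. I expect this to be the main (and essentially only) obstacle, but it is a routine invocation of the universal coefficient theorem together with the existence of finite definable triangulations; since the paper works with a fixed coefficient group (typically $\Q$), one may simply state that ordinary definable homology with coefficients in $G$ is finitely generated and cite the triangulation theorem.

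\begin{proof}
By Theorem \ref{thm simplicial is singular}, there exist definable subsets $X_j$ of $X$ and isomorphisms $H_j ^v(X) \simeq \mathrm{Im}(H_j(X_j) \to H_j(X_{j+1}))$, the right-hand side being ordinary definable homology. Since every definable subset of $R^n$ admits a finite definable triangulation (see \cite{coste}, \cite{vds2}), its simplicial chain complex consists of finitely generated free modules in each degree, so its homology groups are finitely generated. In particular $H_j(X_j)$ is finitely generated, hence so is the image of the map $H_j(X_j) \to H_j(X_{j+1})$, being a quotient of $H_j(X_j)$. Therefore $H_j ^v(X)$ is finitely generated.
\end{proof}
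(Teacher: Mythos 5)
Your proof is correct and follows exactly the route the paper intends: the corollary is an immediate consequence of Theorem \ref{thm simplicial is singular}, since ordinary definable homology is finitely generated (by finite definable triangulation) and the image of $H_j(X_j)\to H_j(X_{j+1})$ is a quotient of a finitely generated module. The extra remarks on the coefficient group and universal coefficients are fine but not needed beyond what the paper takes for granted.
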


 Note that the above corollary
 enables us to define an Euler characteristic which is a definable metric invariant by setting:%the Lefschetz numbers as follows. A
%Lipschitz function $f:X \to X$, where $X$ is a definable set,
%induces a maps on every homology group (with coefficients in $\Q$)
%$f_{*,i}:H_j ^v(X) \to :H_j ^v(X)$. Let $Tr f_{*,i}$ its trace and
%let: $$\Lambda^v _f= \sum_{i=1} ^\infty Tr f_{*,i}.$$
 $$\chi_v (X) :=\sum_{i=1} ^\infty (-1)^i \dim H_i
^v (X).$$

 This invariant for definable subsets of $R^n$
gives rise to a metric  for definable families or for germs of
definable sets (see example \ref{ex fam} and section $4$ below).

\begin{rk}The hypothesis closed is assumed for convenience. We could shrink an open tubular neighborhood of radius $z \in v$
 of the points lying in the closure
but not in $X$
    so
that we would have a deformation retract of our set onto the
complement of this neighborhood which is very close to the
identity, and hence which preserves thin subsets, identifying the
vanishing homology groups of our given set with those of a closed
subset.
 \end{rk}

\section{Local invariants for  singularities.} In \cite{v2}, we
introduced the link for a semi-algebraic metric space. Let us
recall its definition. We recall that we denote by  $k(0_+)$ the
field of algebraic Puiseux series endowed with the order that
makes the indeterminate $T$ positive and  smaller than any real
number. We denote by  $d$  the Euclidian distance. Given the germ at $0$ of a semi-algebraic set $X$   let: %let
%$\widehat{X} \subset R^n \times R$ denote the graph of the
%distance function (to the origin) and let:
$$L_X:=\{x \in X_{k(0_+)}: d(x;0)=T\} $$
where $T \in k(0_+)$ is the indeterminate and $X_{k(0_+)}$ the
extension of $X$ to $k(0_+)$.

In \cite{v2} we proved that the  set $L_X$ is a metric invariant
which characterizes the metric type of the singularity in the
sense that: \begin{thm}\label{thm link}\cite{v2} $L_X$ is
semi-algebraically bi-Lipschitz homeomorphic to $L_Y$ iff $Y$ is
semi-algebraically bi-Lipschitz homeomorphic to $X$.\end{thm}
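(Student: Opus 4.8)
The plan is to prove the nontrivial implication --- that a semi-algebraic bi-Lipschitz homeomorphism $L_X \to L_Y$ forces $X$ and $Y$ to be semi-algebraically bi-Lipschitz homeomorphic as germs at $0$ --- by relating the link $L_X \subset k(0_+)^n$ to the \emph{real} semi-algebraic family
\[
\mathcal L_X := \{(x,t)\in X\times(0;\varepsilon] : |x|=t\},
\]
of which $L_X$ is precisely the generic fibre in the sense of Example \ref{ex fam}. The converse implication ($X$ bi-Lipschitz to $Y$ $\Rightarrow$ $L_X$ bi-Lipschitz to $L_Y$) is comparatively soft: a semi-algebraic bi-Lipschitz germ homeomorphism $h:(X,0)\to(Y,0)$ with integer constant $N$ extends to $X_{k(0_+)}\to Y_{k(0_+)}$, maps $L_X$ into the annulus $\{\,T/N\le |y|\le NT\,\}\cap Y_{k(0_+)}$, and composing with the radial retraction $y\mapsto Ty/|y|$, which is bi-Lipschitz with controlled constants on that annulus, yields a semi-algebraic bi-Lipschitz map $L_X\to L_Y$; surjectivity and injectivity follow because it is a proper local homeomorphism between definable sets of the same dimension.

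For the main direction, first note that $x\mapsto(x,|x|)$ is a semi-algebraic bi-Lipschitz homeomorphism from a punctured neighbourhood of $0$ in $X$ onto $\mathcal L_X$, and that $(\mathcal L_X)_t$ is bi-Lipschitz to $X\cap\{|x|=t\}$ with constants independent of $t$. Hence the metric germ $(X,0)$ is, up to bi-Lipschitz equivalence, the ``metric cone'' over the family $\mathcal L_X$: it is determined by $\mathcal L_X$ together with the radial coordinate $t$. It therefore suffices to produce from a semi-algebraic bi-Lipschitz homeomorphism $g:L_X\to L_Y$ a semi-algebraic homeomorphism $G:\mathcal L_X\to\mathcal L_Y$ that is bi-Lipschitz on each fibre with uniform constants and respects the radial coordinate (after a monotone semi-algebraic reparametrisation of the base), and then to ``cone up'' $G$ to a genuine germ homeomorphism $(X,0)\to(Y,0)$.

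The passage from $g$ to $G$ is the model-theoretic heart of the argument, and is exactly where the bi-Lipschitz version of Hardt's theorem of \cite{v1} enters: since $g$ is $\la_R$-definable over $R=\R$ with the parameter $T$, and ``being a bi-Lipschitz homeomorphism with integer constant $N$'' is a first-order property, $g$ is the generic fibre of a definable family $(g_t)_{0<t\le\varepsilon}$ of semi-algebraic bi-Lipschitz homeomorphisms $(\mathcal L_X)_t\to(\mathcal L_Y)_t$ with the \emph{same} integer constants, and collecting them gives $G$. The hard part is the final coning-up step: one must upgrade the fibrewise-Lipschitz $G$ to a map that is Lipschitz in the transverse (radial) direction as well, i.e.\ control how $g_t$ varies with $t$ as the sphere sections $X\cap\{|x|=t\}$ collapse onto $0$. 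This is not automatic, and is precisely where definability is indispensable; one runs a cell-decomposition/triangulation argument in the spirit of Proposition \ref{proj reg hom pres} and the construction of \cite{v1} to arrange, after composing fibrewise with radial retractions, that $t\mapsto g_t$ is itself Lipschitz in $t$, whence the map $H(x):=$ the point of $Y$ with $|H(x)|=|x|$ determined by $g_{|x|}$ applied to the radial retraction of $x$ is a semi-algebraic bi-Lipschitz germ homeomorphism $(X,0)\to(Y,0)$. This transverse control is the main obstacle; the remainder is bookkeeping with Lipschitz constants.
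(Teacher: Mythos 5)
First, a point of reference: the paper does not prove Theorem \ref{thm link} at all; it is quoted from \cite{v2}, and the only indication given of its proof is the remark that its main ingredient is Theorem $5.1.3$ of \cite{v1} (the preparation/Lipschitz triangulation theorem, which is specific to polynomially bounded structures). Your proposal has a genuine gap, and there is a structural way to see that it must: nothing in your argument uses semi-algebraicity or polynomial boundedness beyond first-order definability and o-minimality (the generic-fibre formalism, spreading a definable object over $k(0_+)$ into a family $(g_t)$, radial retractions, cell decompositions all make sense in any o-minimal structure). If your argument were complete it would therefore prove the same equivalence in the $ln$-$exp$ structure, contradicting the example given in section $4$ of the paper: the germs $|y|=e^{-1/x^2}$ and $|y|=e^{-2/x^2}$ are definably bi-Lipschitz homeomorphic, yet their links are not bi-Lipschitz equivalent. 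Note that the implication which fails there is precisely the one you call ``comparatively soft''.

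The concrete flaws are these. In your soft direction, $h_{k(0_+)}$ does map $L_X$ into $\{T/N\le|y|\le NT\}\cap Y_{k(0_+)}$, but the radial retraction $y\mapsto Ty/|y|$ does not map $Y_{k(0_+)}$ into itself ($Y$ is not a cone), so your composite does not land in $L_Y$; what is actually needed is a definable bi-Lipschitz retraction of $Y_{k(0_+)}\cap\{T/N\le|y|\le NT\}$ onto $L_Y$, i.e.\ a bi-Lipschitz trivialization of the family of sphere sections $Y\cap S(0,t)$ over a small interval of radii with uniformly controlled constants -- and this is exactly where Theorem $5.1.3$ of \cite{v1} and polynomial boundedness enter, and why the statement breaks in the $ln$-$exp$ example. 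In your hard direction, the model-theoretic step is fine: since being a bi-Lipschitz homeomorphism with integer constant $N$ is first order, $g$ does spread out into a definable family $(g_t)$ of fibrewise bi-Lipschitz homeomorphisms with the same constant for small $t>0$. But uniform fibrewise control does not assemble into a bi-Lipschitz homeomorphism of germs; the transverse (radial) Lipschitz control of $t\mapsto g_t$ is the entire content of the theorem, and you only assert that it ``can be arranged'' by a cell-decomposition argument in the spirit of Proposition \ref{proj reg hom pres}, without any construction. It is not a routine matter: germs are in general not bi-Lipschitz equivalent to metric cones over their links (the cusp $y^2=x^3$ already shows the two branches approach each other faster than radially), so ``coning up'' a fibrewise map requires the quantitative description of distances of the form $|h(x)-h(x')|\sim\sum_i\varphi_i(x)|x_i-x'_i|$ provided by \cite{v1}, which is what the actual proof in \cite{v2} is built on. As it stands, both implications in your proposal rest on unproved steps that are equivalent in difficulty to the theorem itself.
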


This theorem admits the following immediate corollary.

\begin{cor}
For any convex subgroup  $v \subset k(0_+)$, the groups $H_j ^{v}
(L_X)$ are semi-algebraic bi-Lipschitz invariants of $X$.
\end{cor}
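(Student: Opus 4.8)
The plan is to deduce this corollary directly from Theorem \ref{thm link} together with the fact, established earlier, that the $v$-vanishing homology groups are preserved by definable bi-Lipschitz homeomorphisms. First I would recall that by the discussion following Remark \ref{rmk x vthin vhom is hom}, any definable bi-Lipschitz homeomorphism $\psi: A \to B$ between definable subsets induces isomorphisms $\psi_{j,v}: H_j ^v(A) \xrightarrow{\sim} H_j ^v(B)$ for every $j$, since a Lipschitz map carries $(j;v)$-thin sets to $(j;v)$-thin sets and this works in both directions. Hence the isomorphism type of the family $(H_j ^v(A))_j$ is an invariant of the semi-algebraic bi-Lipschitz equivalence class of $A$.

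The key step is then to apply this observation to $A = L_X$. Suppose $X$ and $Y$ are germs of semi-algebraic sets at $0$ that are semi-algebraically bi-Lipschitz homeomorphic. By the ``if'' direction of Theorem \ref{thm link}, $L_X$ is semi-algebraically bi-Lipschitz homeomorphic to $L_Y$; that is, there is a semi-algebraic (hence $k(0_+)$-definable) bi-Lipschitz homeomorphism $L_X \to L_Y$. Composing with the isomorphisms $\psi_{j,v}$ from the previous paragraph yields $H_j ^v(L_X) \simeq H_j ^v(L_Y)$ for all $j$ and all convex subgroups $v \subset k(0_+)$. Thus the groups $H_j ^v(L_X)$ depend only on the bi-Lipschitz class of $X$, which is precisely the assertion. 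I would note that $L_X$ is a closed definable subset (it is the intersection of $X_{k(0_+)}$ with the sphere $d(x;0) = T$), so that Corollary \ref{cor groups finitely gen} applies and the groups in question are genuine finitely generated invariants.

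There is essentially no obstacle here: the corollary is a formal consequence of Theorem \ref{thm link} and the functoriality of $H_j ^v$ under definable bi-Lipschitz maps. The only point requiring a word of care is that the bi-Lipschitz homeomorphism furnished by Theorem \ref{thm link} lives over the real closed field $k(0_+)$ rather than over $\R$, so one must apply the invariance statement in the setting where the fixed real closed field $R$ is taken to be $k(0_+)$; but this is exactly the generality in which the vanishing homology and its bi-Lipschitz invariance were developed in Sections 1--3, so nothing new is needed.
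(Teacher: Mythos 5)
Your argument is correct and is exactly the paper's route: the paper states the corollary as an immediate consequence of Theorem \ref{thm link} together with the bi-Lipschitz invariance of the $v$-vanishing homology groups established in Section 1.3, which is precisely what you spell out. Your additional remarks (that the homeomorphism lives over $k(0_+)$ and that Corollary \ref{cor groups finitely gen} gives finite generation) are consistent with the paper's setup and add nothing problematic.
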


Note that by Corollary \ref{cor groups finitely gen} these groups
are finitely generated and that $\chi_v(L_X)$ is a semi-algebraic
bi-Lipschitz invariant of the germ $X$.

 \medskip

\begin{rk}
We assumed in this section that $X$ is a semi-algebraic set
because this was the setting of \cite{v2}. Nevertheless, the main
ingredient of  the proof of Theorem \ref{thm link} is Theorem
$5.1.3$ of \cite{v1}. As this theorem holds over  any polynomially
bounded o-minimal structure, the above corollary is still true in
this setting as well. The metric type of the link $L_X$ may fail
to be a metric invariant of the singularity when the set is
definable in a non-polynomially bounded o-minimal structure as it
is shown by the following example.
 \end{rk}
\begin{ex}
Let $X:=\{(x;y) \in \R^2: |y| = e^{\frac{-1}{x^2}}\}$ and
$Y=\{(x;y) \in \R^2: |y| = e^{\frac{-2}{x^2}}\}$. Note that $X$
and $Y$ are both definable in the $ln-exp$ structure (see
\cite{vds}, \cite{rl}, \cite{w}). Furthermore $X$ and $Y$ are
definably bi-Lipschitz homeomorphic. However the links of $X $ and
$Y$ are constituted by two points of $k_{0_+}^2$ (where $k_{0_+}$
is the corresponding residue field) whose respective distances are
clearly not equivalent.

Note that a revolution of these subsets about the $x$-axis
provides two subsets whose links  have different vanishing
homology groups (for a suitable velocity).
\end{ex}

\section{Some examples.}We give two examples of computations of the homology groups.
 It is convenient to develop ad hoc techniques to compute the
 homology groups such as the  excision
 property.

\subsection{ The excision property.}
 It follows from the  definition that we may  have
 $c+c'$ in $C_j ^v$ although  neither $c$ nor $c'$ belong to this
 set. This is embarrassing since it makes it impossible the
 splitting of a chain of $X$ into a chain of $X\setminus A$ plus a
 chain of $A$, which is crucial for the  excision property. To overcome this difficulty we are going to
 consider
 more chains. This will {\it not} affect the
 resulting
 homology groups.

%\begin{prop}
%Let $X$ be a definable set and let $U$ and $V$ be such that $X =U
%\cup V$ and assume that  $U \cap V$ is $v$-thin. Then the
%following sequence is exact:
%$$0 \to C_j ^v(U \cap V) \to C_j ^v(U) \oplus C_j^v(V) \to C_j ^v(X) \to 0.$$
%\end{prop}

 We defined the vanishing homology groups by
 requiring for a chain $\sigma$ that $|\sigma|$ and $|\partial
 \sigma|$ to be  both $(j;v)$-thin.  We may work with another chain complex.

 Let $A$ and $X$ be closed definable subsets of $R^n$ with $A \subset X$  and denote by $\F$ the pair $\{X
\setminus Int(A);A\}$ where $Int (A)$ is the interior of $A$. Let
$\Delta^v _j(X)$ the subset of $C ^v _j(X;\F)$ constituted by the
  $j$-chains having a $(j;v)$-thin support. Of course, such a family of modules  is not preserved by the
 boundary operator but, if we want to have a chain complex, we may add the boundaries by setting:
 $$\Delta'^v _j(X):=\Delta_j ^v(X) +\partial \Delta_{j+1}^v(X).$$
 This provides a chain complex
with  obviously $H_j(\Delta'^v (X))=H_j^v(X)$.

The inconvenient is that we are going to work with non admissible
chains but the advantage is that we have now more freedom to work
since we have more chains. For instance if $(c_1+c_2) \in
\Delta'^v _j (X)$ then $c_1$ and $c_2$ both belong to $\Delta'^v
_j (X)$.

\medskip
%\begin{rk}
To state the excision property we need to introduce the homology
groups of a pair. For this purpose, we first set:$$ \Delta_j ^v(X
/A):=\{c \in \Delta_{j} ^v(X  ):  (\partial c-\partial_A c) \in
\Delta_{j-1} ^v(X ) \},$$ where $\partial_{ A}$ takes the boundary
and projects it onto $C_j ( A)$.

 Define also
$$\Delta_{j}^{v,X}(A):= \Delta_j ^v( A) + \partial_{ A} \Delta_{j+1} ^v(X/ A ).  $$

 First observe that by definition
if $c \in \Delta_{j+1}^v(X/A )$ then $$\partial_A c \in \Delta_{j}
^v(X ) +
\partial \Delta_{j+1} ^v(X ).$$

Therefore, by definition of $\Delta^{v,X}_j$ we get
$$\Delta_j ^{v,X}(A ) \subset \Delta_j ^v(X )+ \partial \Delta_{j+1} ^v(X ) =\Delta'^v_j(X ).
$$

Thus, we may set $$\Delta^v _j(X;A ):=\frac{\Delta'^v_j (X
)}{\Delta^{v,X}_j(A )},$$

and $$H_j ^v(X;A):=H _j (\Delta^v(X;A )).$$
\begin{rk}\label{rk hom pair X thin}
If $X$ is a $v$-thin set {\it of dimension $j$} then $H_j ^v(X;A)=
H_j (X;A)$ (see Remark \ref{rmk x vthin vhom is hom}).
\end{rk}

Let $i:\Delta _j ^{v,X}(A ) \to \Delta'^v _j(X )$ be the
inclusion. Clearly, we
 have the following  exact sequences:
\begin{equation}\label{lem exact sequence pair}
  0 \to
 \Delta_j ^{v,X}(A  ) \overset{i}{\to} \Delta_j '^v (X  )
 \overset{q}{\to} \Delta'^v_j (X;A  )\to 0,
\end{equation}
(where  $q$ is the quotient map)
%\end{rk}
\bigskip
and therefore we get the following long exact sequence:
$$...\to H_j ^v(\delta_X A) \to H_j ^v(X) \to H_j ^v (X;A) \to H ^v _{j-1} (\delta_X A) \to  ...$$

\medskip

\begin{rk}
We could have defined the homology groups of a pair by $H_j
^v(X;A):= H_j^v(C^v(X;A))$ where $C^v(X;A):=\frac{C_j
^v(X)}{C_j^v(A)}$, and of course the latter exact sequence would
hold for $H_j ^v(A)$ (instead of $H_j ^v (\delta_X A)$). However
the excision property would not hold.
\end{rk}

As we said, if $(c + c')$ belongs to $\Delta_j ^v(X)$ then $c$ and
$c'$ both belong to $C_j ^v (X)$. Therefore, the excision property
holds for $H_j ^v (X;A)$. Let $(X;A)$ and $W$ be definable such
that $W$ lies in the interior of $A$. Then for any $j$:
\begin{equation}\label{eq excision property}
H_j ^v(X;A) = H_j ^v (X \setminus W;A \setminus W).
\end{equation}

\medskip

\subsection{Two examples.} We are going to compute the vanishing
homology groups on two examples which are semi-algebraic families.
Let us  take  $\Q$ as our coefficient group.

\begin{ex} We first compute the homology groups on the example sketched on fig 1. We consider two spheres from which we shrink a little disk which
collapses into a point and which intersects along the boundaries
of these disks.

Let $$X(\varepsilon):=\{(x;y;z) \in k(0_+) ^3:
(x-\varepsilon(1-T^4))^2+y^2+z^2=1,
 \varepsilon x \geq 0\}$$
for $\varepsilon =\pm 1$. Then let $X:=X(1) \cup X(-1)$ and
$A=X(1) \cap X(-1)$.

 Let us simply consider the velocity $T^2$. The
computation could actually be carried out for any velocity. Since
the set $A $ is $\N T^2$-thin we have: $$H_1 ^{T^2}(\delta_X A
)=H_1 ^{T^2}(A )=H_1(A)=\Q,$$ and  $H_0^{T^2}(\delta_X A )=0$.

Note that, thanks to the excision property, we have:
$$H_1 ^v (X;A) \simeq H_1 ^v(X(1);A) \oplus  H_1 ^v (X(-1);A). $$
If we add the disk  $$D=\{(x;y;z) \in k(0_+) ^3:
(x-\varepsilon(1-T^4))^2+y^2+z^2=1,
 \varepsilon x \leq 0\}$$ to $X(1)$,
we get the sphere $S^2$. Thus, by the excision property, $$H_1
^{T^2} (X(1);A) \simeq H_1 ^{T^2} (S^2;D)=0,$$ and so $H_1 ^{T^2}
(X;A)=0$.
 Examining the
exact sequence of the pair $(X;A)$ %Therefore
 we see that: $$H_1 ^{T^2}(X)\simeq H_1 ^{T^2}(\delta_X A)\simeq \Q.$$
 Observe also that we  have: $H_2
^{T^2} (X)\simeq 0$ and $H_0 ^{T^2}(X)\simeq 0$.
 \end{ex}

\medskip

We end by computing the vanishing homology groups of
Birbrair-Goldshtein examples (compare with \cite{bb1} section 7.)
.

\begin{ex}\label{ex tore}
Let $X$ be the set defined by (\ref{eq birgol})   assume that
$p<q$. Let us compute for instance the vanishing homology groups
for the velocity $T^q$. We could use here the excision property
and follow the classical methods for computing the homology groups
of the torus but it is actually simpler to derive it from the
classical homology groups of $X$ since it is $\N T^q$-thin. This
implies that the inclusion $H^{T^q} _2 (X) \to H_2 (X)$ is an
isomorphism and that the inclusion $H_1 ^{T^q} (X) \to H_1  (X)$
is one-to-one. Therefore $$H^{T^q} _2 (X) \simeq \Q$$ and $\dim
H^{T^q} _1 (X) \leq 2$. Actually,  one generator of $ H_1 (X)$
 has  a representant with  $T^q$-thin support and every $1$-chain representing a different class has a support whose length is clearly
 bigger than $T^p$. This  proves that  $\dim H^{T^q} _1
(X) = 1$.

%Let $A$ be the subset defined by $|x_1|
%\leq \frac{T^q}{4}$ and $X'$ (resp. $X''$) be the subset of $X$
%defined by $x_1 \geq \frac{T^q}{4}$ and $A'$ (resp. $A'$) be the
%subset of $A$ defined by $x_1=\frac{T^q}{4}$. Note that as $X'$
%and $A'$ are $T^q$-thin we get $$H_2 ^{T^q} (X')=H_2 (X')=0$$ and
%$H_0 ^{T^q}(A')=0$.
%
%By Remark \ref{rk hom pair X thin}, we also have $H_2
%^{T^q}(X';A')=0$. The set $A$ has only two connected components
%which are $T^q$-thin and  whose distance to each other is more
%than $T^q$. So, $H_0 (\delta_X A)=0$.
%
%Now, examining the sequence of the pair $(X';A')$ it is an easy
%exercise to show that $H_1 ^v (X';A') = 0 $. We may argue in the
%same way for $X''$ so that  it follows from the excision property
%that $H_2 ^{T^q}(X;A) \simeq \Q^2$ and $H_1 ^{T^q}(X;A)=0$. Now,
%by looking at the exact sequence of the pair $(X;A)$ we may
%derive that $H_1 ^{T^q} (X) \simeq \Q$.

%Note that by$A$ be the subset
%defined by $x_1 \leq 0$  and let $B$ be the subset defined by $x_1
%\geq 0$. We may assume that $p<q$. Let us compute for instance the
%vanishing homology groups for the velocity $T^q$.
%definition, as $A$, $B$ and $A \cap B$ is $T^q$-thin, we have $H_1
%^{T^q} (A \wedge B)= H_1 (A \cap B)= \Q \oplus \Q$ and the
%inclusions $A \cap B \hookrightarrow A$ and  $A \cap B
%\hookrightarrow A$ induce map in $T^q$-vanishing homology having
%rank one and having the same kernel. The set $A \cap B$ is the
%union of two circles whose distance from each other is more than
%$T^p$. Therefore, as $p <q$, $H_0 ^{T^q} (A \cap B)=0$ and so by
%the Mayer-Vietoris

\end{ex}

\noindent{\bf Acknowledgements.} The author is very grateful to
Pierre Milman for valuable discussions on related questions and
would like to thank the Department of Mathematics of the
University of Toronto where this work has been carried out.


\begin{thebibliography}{mmm}
\bibitem [BB1]{bb1} L. Birbrair and J.-P. Brasselet,  Metric homology. Comm. Pure Appl. Math. 53 (2000), no. 11, 1434--1447.
   \bibitem [BB2]{bb2} L. Birbrair and J.-P. Brasselet, Metric homology for isolated conical singularities. Bull. Sci. Math. 126 (2002), no. 2,
   87--95.
\bibitem[BCR]{bcr}  J. Bochnak, M. Coste and M.-F. Roy, Real Algebraic
Geometry. Ergebnisse der Math. 36,
 Springer-Verlag (1998).
 \bibitem[C]{coste} M. Coste, An Introduction to O-minimal Geometry. Dip. Mat. Univ. Pisa, Dottorato di Ricerca in Matematica, Istituti Editoriali e Poligrafici
  Internazionali, Pisa (2000). http://perso.univ-rennes1.fr/michel.coste/polyens/OMIN.pdf
   \bibitem[vDS]{vds} L. van den Dries and P. Speissegger,  The field of reals with multisummable series and the exponential function. Proc.
   London Math. Soc. (3) 81 (2000), no. 3, 513--565.
\bibitem[vD]{vds2} L. van den Dries: Tame Topology and O-minimal Structures. London
Math. Soc. Lecture Note 248. Cambridge Univ. Press 1998.
\bibitem[GM]{gm1} M. Goresky and R. MacPherson,  Intersection homology theory. Topology 19 (1980), no. 2, 135--162.
%\bibitem[GM2]{gm2} M. Goresky, R. MacPherson, Intersection homology. II. Invent. Math. 72 (1983), no. 1, 77--129.
%\bibitem[GM2]{gm3}M. Goresky and R. MacPherson,  Lefschetz fixed point theorem
%for intersection homology. Comment. Math. Helv. 60 (1985),
%   no. 3, 366--391.
\bibitem[H]{h} R. M. Hardt,
Semi-algebraic local-triviality in semi-algebraic mappings. Amer.
J. Math. 102 (1980), no. 2, 291--302.
\bibitem[LR]{rl} J.-M. Lion and J.-P. Rolin,  Th\'eor\`eme de pr\'eparation
pour les fonctions logarithmico-exponentielles,  Ann. Inst.
Fourier, 48 (1998), no. 3, 755--767.
\bibitem[M]{m}  D. Marker, Model theory. An introduction.
 Graduate Texts in Mathematics, 217. Springer-Verlag, New York, 2002.
\bibitem[V1]{v1} G. Valette,  Lipschitz triangulations.
Illinois J. of Math., Vol.  49, No. 3, Fall 2005, 953-979.
\bibitem[V2]{v2}G. Valette, The link of the germ of a semi-algebraic metric space. Proc. Amer. Math. Soc. 135 (2007),
3083-3090.
\bibitem[V3]{v3}G. Valette, On metric types that are definable in an o-minimal structure.  to appear in Journal of Symbolic Logic.
\bibitem[W]{w} A. J. Wilkie,
   Model completeness results for expansions of the ordered field of real numbers by restricted Pfaffian functions and the exponential function.
   J. Amer. Math. Soc. 9 (1996), no. 4, 1051--1094.
\end{thebibliography}
\end{document}